\def\P{\text{\bf P}}
\def\arrow{\rightarrow}
\def\iso{\cong}
\def\Pic{\text{Pic}} 
\def\Aut{\text{Aut}}
\newcommand{\PP}{\mathbb{P}}
\newcommand{\ZZ}{\mathbb{Z}}
\newcommand{\OO}{\mathcal{O}}
\newcommand{\QQ}{\mathbb{Q}}
\newcommand{\RR}{\mathbb{R}}
\def\PsAut{\text{PsAut}}
\newcommand{\Nef}[1]{\overline{A(#1)}}
\newcommand{\Mov}[1]{\overline{M(#1)}}
\newcommand{\Curv}[1]{\overline{\text{Curv}(#1)}}
\newtheorem{theorem}{Theorem}[section]
\newtheorem{lemma}[theorem]{Lemma}
\newtheorem{corollary}[theorem]{Corollary} 
\newtheorem{proposition}[theorem]{Proposition} 
\newtheorem{conjecture}[theorem]{Conjecture} 
\newtheorem{claim}[theorem]{Claim}
\theoremstyle{remark}
\newtheorem{remark}[theorem]{Remark}
\begin{document}
\Large
\title{Fano manifolds of index $n-1$ and the cone conjecture}

\author{Izzet Coskun} 
\author{Artie Prendergast-Smith}
\date{} 

\address{University of Illinois at Chicago, Department of Mathematics,
  Statistics and Computer Science, Chicago, IL 60607}

\email{coskun@math.uic.edu, artie@math.uic.edu}

\thanks{During the preparation of this article the first author was
  partially supported by the NSF CAREER grant DMS-0950951535, and an
  Alfred P. Sloan Foundation Fellowship.}

\begin{abstract}
  The Morrison-Kawamata cone conjecture predicts that the actions of
  the automorphism group on the effective nef cone and the
  pseudo-automorphism group on the effective movable cone of a klt
  Calabi-Yau pair $(X, \Delta)$ have finite, rational polyhedral
  fundamental domains.  Let $Z$ be an $n$-dimensional Fano manifold of
  index $n-1$ such that $-K_Z = (n-1) H$ for an ample divisor
  $H$. Let $\Gamma$ be the base locus of a general $(n-1)$-dimensional
  linear system $V \subset |H|$. In this paper, we verify the
  Morrison-Kawamata cone conjecture for the blow-up of $Z$ along
  $\Gamma$.
\end{abstract}

\maketitle
\tableofcontents

\section{Introduction}\label{section-introduction}

Let $n \geq 3$ be a positive integer. Let $Z$ be an $n$-dimensional
Fano manifold of index $n-1$ over an algebraically closed field $k$ of
characteristic zero. Let $H$ be an ample divisor that satisfies
$-K_Z = (n-1) H$.  Let $V \subset | H |$ be a general
$(n-1)$-dimensional linear system with base locus $\Gamma$ and let $X$
be the blow-up of $Z$ along $\Gamma$. In this paper, we verify the
Morrison-Kawamata cone conjecture for $X$.  \smallskip

The nef and movable cones are among the most important invariants of a
projective variety $Y$. Recall that a divisor $D$ is nef if $D \cdot C
\geq 0$ for every curve $C$. A divisor $D$ is movable if its stable
base locus has codimension at least two in $Y$. Nef and movable
divisors form two convex cones in the N\'{e}ron-Severi space $N^1(Y)$
of $Y$, which control the contractions and the birational contractions
of $Y$, respectively.  \smallskip

For Fano varieties, these cones are as simple as possible. The nef
cone is rational polyhedral by the Cone Theorem of
Mori--Kawamata--Koll\'ar--Reid--Shokurov \cite[Theorem
3.7]{KollarMori1998}, and the movable and effective cones are rational
polyhedral by a theorem of Birkar--Cascini--Hacon--McKernan
\cite{BCHM}. However, once we leave the world of Fano varieties, the
cones may be very complicated.  Even for simple examples of
Calabi--Yau varieties such as $K3$ surfaces, the nef cone can have
uncountably many extremal rays.  \smallskip

Nevertheless, one can still hope for a satisfying description of these
cones in the Calabi--Yau case if automorphisms are taken into
account. The Morrison--Kawamata cone conjecture \cite{Morrison1992,
  Kawamata1997} predicts that for a Calabi--Yau variety, there are
rational polyhedral fundamental domains both for the action of
automorphisms on the effective nef cone and for the action of certain
birational automorphisms on the effective movable cone. We now recall
the statement of the Morrison--Kawamata cone conjecture that we will
need in this paper.  We refer the reader to Section 1 of
\cite{Totaro2008b} for a generalization of the conjecture to the
relative setting and for history and examples.  \medskip

\noindent {\bf The Morrison--Kawamata Cone Conjecture.} Throughout
this paper, a {\it rational polyhedral cone} in a real vector space
with a $\QQ$-structure means a closed convex cone with finitely many
extremal rays, each spanned by a rational vector. For $X$ a normal
projective variety, let $N^1(X)$ denote the N\'{e}ron-Severi space of
Cartier divisors on $X$ modulo numerical equivalence. We denote by
$N^1(X)_\ZZ$ the free abelian group in $N^1(X)$ consisting of
numerical classes of Cartier divisors. We denote the nef cone and the
closure of the movable cone of $X$ by $\Nef{X}$ and $\Mov{X}$,
respectively. Let $B^e(X)$ denote the cone generated by effective
Cartier divisors. We denote by $\Nef{X}^e$ and $\Mov{X}^e$ the
intersections $\Nef{X} \cap B^e(X)$ and $\Mov{X} \cap B^e(X)$, and
call them the {\it effective nef cone} and the {\it effective movable
  cone}, respectively.  \smallskip

Define a {\it pseudo-isomorphism} from $X_1$ to $X_2$ to be a
birational map $X_1 \dashrightarrow X_2$ which is an isomorphism in
codimension one. A {\it small $\QQ$-factorial modification} (SQM) of
$X$ is a pseudo-isomorphism from $X$ to another $\QQ$-factorial
projective variety. An SQM $\alpha: X \dashrightarrow X'$ gives an
identification of the vector spaces $N^1(X)$ and $N^1(X')$ by
pushforward and pullback. Since the pullback of an ample divisor is
movable, this identification embeds $\Nef{X'}^e$ as a subcone of
$\Mov{X}^e$.  This embedding depends on the map $\alpha: X
\dashrightarrow X'$, so we denote the image by $\Nef{X',\alpha}^e$.
Pre-composing the SQM $\alpha$ with a pseudo-automorphism of $X$ gives
another SQM $\beta: X \dashrightarrow X'$, and hence another embedding
of $\Nef{X'}^e$ in $\Mov{X}^e$. We can sum this up by saying: {\it
  pseudo-automorphisms of $X$ permute the nef cones
  $\Nef{X',\alpha}^e$ of SQMs of $X$ inside $\Mov{X}^e$}.  \smallskip

For an $\RR$-divisor $\Delta$ on a normal $\QQ$-factorial variety $X$,
the pair $(X,\Delta)$ is {\it klt} if, for all resolutions $\pi:
\tilde{X} \arrow X$ with a simple normal crossing $\RR$-divisor
$\tilde{\Delta}$ such that $K_{\tilde{X}}+\tilde{\Delta} =
\pi^*(K_X+\Delta)$, the coefficients of $\tilde{\Delta}$ are less than
1. In particular, if $X$ is smooth and $D$ is a smooth divisor on $X$,
then $(X,rD)$ is klt for any $r<1$. We say that $(X,\Delta)$ is a
{\it klt Calabi--Yau pair} if $(X,\Delta)$ is a $\QQ$-factorial klt
pair with $\Delta$ effective such that $K_X+\Delta$ is numerically
trivial. 
\smallskip

We denote the groups of automorphisms or pseudo-automorphisms of $X$
which preserve a divisor $\Delta$ by $\Aut(X,\Delta)$ and
$\PsAut(X,\Delta)$, respectively. Note that the action of
$\Aut(X,\Delta)$ and $\PsAut(X,\Delta)$ on $N^1(X)$ is determined by
the images of the representations $\Aut(X,\Delta) \arrow
\operatorname{GL}(N^1(X)_\ZZ)$ and $\PsAut(X,\Delta) \arrow
\operatorname{GL}(N^1(X)_\ZZ)$. We denote the images of these
representations by $\Aut^*(X,\Delta)$ and $\PsAut^*(X,\Delta)$. We say
that a finite rational polyhedral cone $\Pi$ is a {\em fundamental
  domain} for the action of a group $G$ on a cone $C$ if $C = G \cdot
\Pi$ and the interiors of $\Pi$ and $g \Pi$ are disjoint for $1 \not=
g \in G$. With these conventions in place, we are ready to state the
Morrison-Kawamata cone conjecture.

\begin{conjecture}[Morrison--Kawamata] \label{conj-coneconjecture}
Let $(X, \Delta)$ be a klt Calabi--Yau pair. Then:

\begin{enumerate}
\item The number of $\Aut(X,\Delta)$-equivalence classes of faces of
the effective nef cone $\Nef{X}^e$ corresponding to birational
contractions or fiber space structures is finite. Moreover, there
exists a finite rational polyhedral cone $\Pi$ which is a fundamental
domain for the action of $\Aut^*(X,\Delta)$ on $\Nef{X}^e$. 
\smallskip

\item The number of $\PsAut(X,\Delta)$-equivalence classes of
nef cones $\Nef{X', \alpha}^e$ in the cone $\Mov{X}^e$
corresponding to marked SQMs  with
marking $\alpha: X' \dashrightarrow X$ is finite. Moreover, there
exists a finite rational polyhedral cone $\Pi'$ which is a fundamental
domain for the action of $\PsAut^*(X,\Delta)$ on $\Mov{X}^e$.
\end{enumerate}
\end{conjecture}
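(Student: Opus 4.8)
My plan is to deduce both parts of the conjecture from a single general principle: a group $\Gamma$ that preserves a lattice and a suitable open convex cone $C$ in $N^1(X)$ admits a rational polyhedral fundamental domain on the effective (rational) part of $C$ as soon as two things hold --- the effective cone coincides with the convex hull $C^+$ of the rational points in $\overline{C}$, and there is a single rational polyhedral cone whose $\Gamma$-translates cover $C^+$. This is the content of Looijenga's cone-decomposition criterion, in the form exploited by Totaro \cite{Totaro2008b}, and it is the natural tool here because the effective nef and movable cones of a Calabi--Yau pair are typically \emph{not} themselves polyhedral, so one cannot hope to describe them without passing to a fundamental domain.

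For part (1) I would take $\Gamma = \Aut^*(X,\Delta)$ acting on $N^1(X)_\ZZ$ and preserving $\Nef{X}$, and I would seek to identify $\Nef{X}^e$ with the rational closure $C^+$ of an $\Aut^*$-invariant non-degenerate cone; the asserted finiteness of $\Aut$-orbits of faces corresponding to contractions or fiber space structures then follows from the existence of the rational polyhedral fundamental domain, since each such face meets only finitely many of its translates. For part (2) I would first establish the chamber structure of the movable cone: running the minimal model program on perturbations $K_X + \Delta + \epsilon A$ and applying Birkar--Cascini--Hacon--McKernan \cite{BCHM}, one expects $\Mov{X}^e$ to be covered by the nef cones $\Nef{X',\alpha}^e$ of the marked SQMs of $(X,\Delta)$, with $\PsAut(X,\Delta)$ permuting these chambers exactly as recorded in the discussion preceding the conjecture. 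Granting this, part (2) becomes the same problem for $\Gamma = \PsAut^*(X,\Delta)$ acting on the interior of $\Mov{X}$.

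The hard part --- and the reason the conjecture is open at this level of generality --- is verifying the hypotheses of Looijenga's criterion, namely the existence of a rational polyhedral cone $\Pi'$ (respectively $\Pi$) with $\PsAut^*(X,\Delta)\cdot\Pi' = C^+$ (respectively $\Aut^*(X,\Delta)\cdot\Pi = C^+$). Equivalently, one must prove that $\PsAut^*(X,\Delta)$ has only finitely many orbits on the set of nef chambers inside $\Mov{X}^e$, and that this group is, up to finite index, an arithmetic subgroup of the automorphism group of a natural integral bilinear form on $N^1(X)$ whose positive cone is commensurable with the effective movable cone. There is no general geometric mechanism forcing either the orbit-finiteness or the arithmeticity; both must be extracted from the specific structure of $(X,\Delta)$ --- for instance the hyperbolicity of the relevant piece of the N\'eron--Severi lattice, or an explicit enumeration of the birational models and their pseudo-automorphisms. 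It is precisely at this arithmetic step that any complete proof of the Morrison--Kawamata conjecture must either impose extra hypotheses or exploit the detailed geometry of the pair; without such input the cone-theoretic machinery, though correctly set up, cannot be pushed to a conclusion.
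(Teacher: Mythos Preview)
The statement is a \emph{conjecture}; the paper does not prove it in general but only verifies it for the specific varieties $X$ obtained by blowing up a del Pezzo manifold $Z$ along the base locus of a general $(n-1)$-dimensional linear system in $|H|$. Your final paragraph correctly acknowledges that the Looijenga/arithmeticity framework cannot be pushed through without geometric input specific to the pair, so as a proof of the full conjecture your proposal is, by your own admission, incomplete --- and necessarily so, since the conjecture is open.

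What the paper actually does for its examples is quite different from your sketch and does not pass through Looijenga's criterion or any arithmeticity statement. For part~(1), Theorem~\ref{nef} computes $\Nef{X}$ explicitly and shows it is already rational polyhedral and spanned by semi-ample divisors; hence $\Nef{X}^e=\Nef{X}$, the image $\Aut^*(X,\Delta)$ in $\operatorname{GL}(N^1(X)_\ZZ)$ is automatically finite, and a fundamental domain is immediate. For part~(2), the decisive geometric input is the elliptic fibration $f:X\to\PP^{n-1}$: the Mordell--Weil group together with the hyperelliptic involution gives an explicit subgroup of $\PsAut(X/\PP^{n-1})$ acting by affine maps on the hyperplane $\{x\cdot F=1\}\subset N^1(X/\PP^{n-1})$, and a polytope fundamental domain $V$ for this action is written down directly (Proposition~\ref{prop-funddomain}). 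The paper then checks case-by-case that finitely many SQMs of $X$, each with rational polyhedral nef cone, cover the preimage of $V$, and uses a graph-distance induction on the adjacency graph of nef chambers (Lemma~\ref{lemma-conestrick}) to conclude that these finitely many chambers generate all of $\Mov{X}^e$ under $\PsAut^*$. Your BCHM chamber decomposition does appear (Lemma~\ref{logflip}), but the finiteness comes from the explicit Mordell--Weil action and hands-on flop calculations rather than from any lattice-theoretic argument; in particular no identification of $\Mov{X}^e$ with a rational closure $C^+$ and no appeal to arithmeticity of $\PsAut^*$ is made.
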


The conjecture has been proved for Calabi--Yau surfaces by
Looijenga--Sterk and Namikawa \cite{Sterk1985,Namikawa1985}, for klt
Calabi--Yau pairs of dimension 2 by Totaro \cite{Totaro2008b}, for
Calabi--Yau fiber spaces of dimension 3 over a positive-dimensional
base by Kawamata \cite{Kawamata1997}, and for abelian varieties by the
second author \cite{Prendergast-Smith2010}. Moreover, the first
statement of the movable cone conjecture for hyperk\"ahler varities
was proved by Markman \cite{Markman}. For Calabi--Yau 3-folds
there are significant results by Oguiso--Peternell \cite{Oguiso2001},
Szendr\"oi \cite{Szendroi1999}, Uehara \cite{Uehara2004}, and Wilson
\cite{Wilson1992}, and verifications of several special cases
\cite{Borcea, GrassiMorrison, Fryers, Oguiso2011, Oguiso2012, LazicPeternell},
but the full conjecture remains open.  \smallskip

In this paper, we verify the Morrison--Kawamata conjecture for certain
blowups of Fano manifolds of index $n-1$. We now explain our examples
more precisely.  Fano manifolds of index $n-1$ are called {\em del
  Pezzo manifolds} and were classified by Fujita in \cite{Fujita1} and
\cite{Fujita2}.  If $Z$ is a del Pezzo manifold of dimension $n \geq
3$, then $Z$ is one of the following:
\begin{enumerate}
\item $Z$ is a linear section of the Grassmannian $Gr(2,5)$ in its
  Pl\"{u}cker embedding.
\item $Z = Q_1 \cap Q_2 \subset \PP^{n+2}$ is an intersection of two
  quadric hypersurfaces in $\PP^{n+2}$.
\item $Z$ is a cubic hypersurface in $\PP^{n+1}$.
\item $Z = \PP^2 \times \PP^2$.
\item $Z = F(1,2;3)$ is the flag variety parameterizing full flags in $k^3$.
\item $Z = \PP^1 \times \PP^1 \times \PP^1$. 
\item $Z \rightarrow \PP^n$ is a double cover of $\PP^n$ branched
  along a quartic hypersurface.
\item $Z$ is the blow-up of $\PP^3$ at a point.
\item $Z$ is a sextic hypersurface in the weighted projective space
  $\PP(3,2,1, \dots, 1)$.
\end{enumerate}

\smallskip
Let $-K_Z = (n-1) H$ and let $V \subset |H|$ be a general linear
system of dimension $n-1$. Then $V$ has $H^n=r$ base points $\Gamma =
\{ p_1, \dots, p_r \}$. Let $[r]$ denote the set of integers $\{1,
\dots, r\}$. Let $\pi: X \rightarrow Z$ be the blowup of $Z$ along
$\Gamma$ and let $E_i$ denote the exceptional divisor over $p_i$.  In
this paper, we verify Conjecture \ref{conj-coneconjecture} for $X$.
\smallskip

Note that if we are in case (8), so $Z$ is the blowup of $\PP^3$ at a
point, then $H= 2\OO(1)-E$ is the linear system of quadrics through
the point, and so $X$ is the blowup of $\PP^3$ in the base locus of a
general net of quadrics. The conjecture has already been verified in
this case in \cite{artie}. Also, in case (9) the conjecture turns out
to be somewhat trivial; we give the details of the proof in Section
\ref{appendix}. For the rest of the paper, therefore, we will
concentrate on cases (1)--(7). To simplify notation, we will refer to
case (1) throughout the paper as $Gr(2,5)$, but the reader should bear
in mind that our argument applies equally well to a linear section of
this variety of dimension at least 3.

\smallskip

Let us explain how the cone conjecture applies to our examples. Let
$X$ be a smooth variety such that $-K_X$ is semi-ample. Choose a
sufficiently large integer $m$ so that the line bundle $-mK_X$ is
base-point-free. By Bertini's Theorem, a general divisor $D$ in the
linear system $|-mK_X|$ is smooth. If we define $\Delta$ to be the
$\QQ$-divisor $\Delta=\frac{1}{m}D$, then $K_X + \Delta$ is
numerically trivial, and $(X,\Delta)$ is a klt pair. So in this case
the cone conjecture makes predictions about the action of
automorphisms or pseudo-automorphisms on the nef cone $\Nef{X}^e$ or
movable cone $\Mov{X}^e$.

\smallskip

Given a klt Calabi-Yau pair, the fundamental difficulty in proving the
cone conjecture is finding the necessary automorphisms and
pseudo-automorphisms needed to verify the conjecture. In Theorem
\ref{nef}, we will show that, in our examples, the nef cone is equal
to the effective nef cone and is rational polyhedral. As a
consequence, we will deduce part (1) of Conjecture
\ref{conj-coneconjecture}. Part (2) of Conjecture
\ref{conj-coneconjecture} is considerably more difficult.  By abuse of
notation, denote the pullback of $H$ by the blowup map $\pi$ also by
$H$. Then $-\frac{1}{n-1} K_X = H - \sum_{i \in [r]} E_i$ defines a
morphism $f: X \rightarrow \PP^{n-1}$ whose general fiber is an
elliptic curve. The elliptic fibration comes with $r$ sections given
by the base-points. This elliptic fibration structure with $r$
sections provides the pseudo-automorphisms that we need to verify the
Morrison--Kawamata cone conjecture. The proof involves some beautiful
explicit classical geometry.  \smallskip


The organization of the paper is as follows. In \S
\ref{section-grassmannian}, we will describe the varieties $X$ and the
elliptic fibrations $f: X \rightarrow \PP^{n-1}$ in greater detail. In
particular, we will show that the fibers of $f$ that are reducible
occur in codimension two. In \S \ref{section-nef}, we will compute the
nef cone of $X$ and prove the first part of Conjecture
\ref{conj-coneconjecture} for our examples. In \S \ref{section-flops},
we will collect basic facts concerning certain flops of $X$ that will
allow us to construct a fundamental domain for the action of the group
of pseudo-automorphisms. In \S \ref{section-movablecone}, we
will study the action of the group of pseudo-automorphisms on the
movable cone in detail to prove the second part of Conjecture
\ref{conj-coneconjecture}.  Finally, in \ref{appendix}, we will explain the case (5). \smallskip

\noindent {\bf Acknowledgments:} We thank Burt Totaro for fruitful
discussions. The first author would like to thank Donghoon David Hyeon
and POSTECH for their hospitality while part of this work was
completed.

\section{The elliptic fibration} \label{section-grassmannian}

In this section, we study the geometry of the elliptic fibration $f: X
\rightarrow \PP^{n-1}$. We discuss the locus of reducible fibers in
detail and show that it has codimension two.  \smallskip

Let $Z$ be a Fano manifold of dimension $n$ and index $n-1$ such that
$-K_Z = (n-1) H$ for a very ample divisor $H$. Consider the embedding
of $Z$ in $\PP^N$ by the complete linear system $|H|$. Let $V$ be a
general $(n-1)$-dimensional linear system of hyperplanes, and consider
the corresponding rational map $\tilde{f}: Z \dashrightarrow V^* =
\PP^{n-1}$. The base locus of $V$ in $\PP^N$ is a linear space
$\Lambda$ of dimension $\PP^{N-n}$. The base locus of $V$ restricted
to $Z$ is a union of $r$ points $\Gamma = \{ p_1, \dots, p_r\}$, where
$r = H^n$. Let $$g: X = \operatorname{Bl}_{\Gamma} Z \rightarrow Z$$
be the blowup of $Z$ along $\Gamma$. Let $E_i$ denote the exceptional
divisor over $p_i$. We then get a morphism $$f: X \rightarrow
\PP^{n-1}$$ given by the sections of $$g^* H - E_1 - \cdots - E_r = -
\frac{1}{n-1} K_X.$$ In particular, $-K_X$ is semi-ample, so the cone
conjecture applies to $X$ as explained in \S \ref{section-introduction}.
\smallskip

By adjunction, the smooth fibers of $f$ are curves with trivial
canonical bundle, therefore, $f$ is an elliptic fibration. The
exceptional divisors $E_1, \dots, E_r$ give sections of the fibration,
hence restrict to $k(\PP^{n-1})$-rational points on the generic fiber
$X_{\nu}$ of $f$.  \smallskip

We now analyze the fibers of $f$ in some detail. We first note that
$f$ is flat. By \cite[Theorem 23.1]{Matsumura1989}, $f$ is flat if and
only if the fibers of $f$ are equi-dimensional.  Each fiber of $f$ is
the proper transform in $X$ of the intersection of $Z$ with a linear
space $S \cong \PP^{N-n+1}$ containing $\Lambda$. If $\dim(S \cap Z)>
1$, then $\dim(\Lambda \cap Z) = \dim(S \cap Z) -1 \geq 1$. This
contradicts that $\Lambda \cap Z$ is the finite set of points
$\Gamma$. We conclude that $f$ is flat and every fiber of $f$ has
dimension one. We already observed that the general fiber of $f$ is a
smooth genus one curve. However, $f$ may have reducible and singular
fibers.

\begin{lemma}\label{d-points}
  Let $C$ be an irreducible component of the fiber of $f$ of degree
  $d< r$. Then $g(C)$ contains exactly $d$ of the points $\Gamma = \{p_1,
  \dots, p_r\}$.
\end{lemma}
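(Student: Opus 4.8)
The plan is to translate the statement into intersection numbers on $X$ and to exploit two structural facts: every component of a fiber is contracted by $f$, and each exceptional divisor $E_i$ is a section of $f$. Write $L = g^*H - \sum_{i\in[r]} E_i = -\tfrac{1}{n-1}K_X$, the class with $L = f^*\mathcal{O}_{\PP^{n-1}}(1)$. Then any curve contracted by $f$ has zero intersection with $L$. Since the irreducible component $C$ is part of a fiber $F = f^{-1}(q)$, its image $f(C)$ is a point, so $C\cdot L = 0$.

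First I would record the resulting degree identity. Rearranging $C\cdot L = 0$ gives
\[
d \;=\; \deg g(C) \;=\; C\cdot g^*H \;=\; \sum_{i\in[r]} C\cdot E_i .
\]
Here $C\cdot g^*H = \deg g(C)$ because $g|_C\colon C\to g(C)$ is birational: the blowup $g$ is an isomorphism away from the exceptional locus, and $C$, being a component of a fiber, is not contained in it. Thus the degree $d$ equals the total intersection of $C$ with the exceptional divisors, and it remains only to show that each $C\cdot E_i$ is $0$ or $1$, equal to $1$ exactly when $p_i\in g(C)$.

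The key input is that each $E_i\cong\PP^{n-1}$ maps isomorphically to the base under $f$, so $E_i$ is a section and hence $F\cdot E_i = 1$ for every fiber $F$; moreover $E_i\cap F$ is a single point, so no fiber component lies in $E_i$. Writing the fiber containing $C$ as a cycle $F=\sum_j m_j C_j$ with $m_j\ge 1$ and each $C_j\not\subset E_i$, every term of $1 = F\cdot E_i = \sum_j m_j\,(C_j\cdot E_i)$ is a nonnegative integer. Positivity then forces, for each $i$, a unique component meeting $E_i$, with $C_j\cdot E_i\le 1$ for all $j$. In particular $C\cdot E_i\in\{0,1\}$, and $C\cdot E_i = 1$ if and only if $C$ meets $E_i$, equivalently $p_i\in g(C)$. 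Substituting into the displayed identity yields $d = \#\{\,i : p_i\in g(C)\,\}$, which is the claim. (As a byproduct, the same positivity argument shows every fiber component has multiplicity one.)

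The only genuinely delicate point is the section property $F\cdot E_i = 1$ together with the nonnegativity and integrality that drive the count: one must know that $f$ restricts to an isomorphism $E_i\to\PP^{n-1}$, so the section meets each fiber transversally in a single point, and that no fiber component is swallowed by an $E_i$. Both follow from the description of $X = \operatorname{Bl}_\Gamma Z$ and of $f$ as the projection from $\Lambda$, and I expect this verification of the section structure to be the main obstacle; granting it, the remainder is a short positivity computation.
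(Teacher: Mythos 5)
Your proof is correct, but it follows a genuinely different route from the paper's. The paper argues with classical projective geometry: if the degree-$d$ component $g(C)$ contained $d+1$ of the base points, then by B\'ezout it would lie in the linear span of those points, hence in $\Lambda$, contradicting the finiteness of $\Lambda\cap Z$; this gives the upper bound ``at most $d$ points,'' and the exact count then follows by comparing with the residual curve of degree $r-d$, since the whole fiber must pick up all $r$ points. You instead work upstairs on $X$ with intersection numbers: from $C\cdot(g^*H-\sum_i E_i)=0$ you get $d=\sum_i C\cdot E_i$, and the section property $F\cdot E_i=1$ combined with the decomposition of the fiber cycle forces each $C\cdot E_i\in\{0,1\}$, equal to $1$ exactly when $p_i\in g(C)$. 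Both inputs you rely on are already established in the paper just before the lemma ($f$ is flat, so all fiber cycles have class $F$, and the $E_i$ are sections), so there is no gap. The trade-off: the paper's argument is more elementary given that $Z$ is embedded by $|H|$, but it genuinely uses very ampleness of $H$; yours avoids that hypothesis entirely (so it would also cover case (7), the double cover of $\PP^n$, where $H$ is ample but not very ample) and yields as a byproduct that every fiber component occurs with multiplicity one, which the paper's proof does not give.
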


\begin{proof}
  The fibers of $f$ are curves of degree $r$.  Suppose that $C$
  contains $d+1$ points $p_{i_1}, \dots, p_{i_{d+1}}$ of
  $\Gamma$. Then by Bezout's Theorem, $C$ is contained in the linear
  span of the points $p_{i_1}, \dots, p_{i_{d+1}}$. Since the linear
  span of the points $p_1, \dots, p_r$ intersects $Z$ in finitely many
  points, we obtain a contradiction. We conclude that $g(C)$ can
  contain at most $d$ points. However, since the image of the fiber
  under $g$ contains all the points $p_1, \dots, p_r$ and the residual
  curve (which has degree $r-d$) can only contain at most $r-d$
  points, we conclude that $g(C)$ contains exactly $d$ of the points.
\end{proof}

We now run through the list of possible $Z$ and describe the geometry
in each case.  \medskip

\noindent {\bf (1) $Z$ is the Grassmannian $Gr(2,5)$.} Let $Gr(2,5)$
denote the Grassmannian parameterizing two-dimensional subspaces of $W
\cong k^5$. For $3 \geq a \geq b \geq 0$, let $\Sigma_{a,b}(F_{n-1-a}
\subset F_{n-b})$, where $F_i$ is an $i$-dimensional subspace of $W$,
denote the Schubert variety parameterizing two-dimensional subspaces
of $F_{n-b}$ that intersect $F_{n-1-a}$ non-trivially. We denote the
class of the Schubert variety by $\sigma_{a,b}$. When $b=0$, we will
abuse notation and denote the Schubert variety by
$\Sigma_a(F_{n-1-a})$. The codimension of the Schubert variety with
class $\sigma_{a,b}$ is equal to $a+b$.  \smallskip

The Pl\"{u}cker map embeds $i: Gr(2,5) \hookrightarrow \PP(\bigwedge^2
W) \cong \PP^9$. The pullback of the hyperplane class via the
Pl\"{u}cker embedding is the Schubert divisor with class
$\sigma_1$. The canonical class is given by $K_{Gr(2,5)} = - 5
\sigma_1= - 5 \ i^* (H)$. The degree of $Gr(2,5)$ is given by
$\sigma_1^6 = 5$, which can be seen by repeated applications of
Pieri's rule \cite[I.5]{gh}. The linear system $V \subset |H|$ is the
set of hyperplanes that contain a general $\Lambda \cong \PP^3 \subset
\PP^9$. Hence, $X$ is the blowup of $Gr(2,5)$ at the five points
$\Gamma = \{ p_1, \dots, p_5\} = Gr(2,5) \cap \Lambda$. The fibers of
$f : X \rightarrow \PP^5$ are elliptic normal curves of degree $5$ in
$\PP^4$. Hence, the reducible fibers contain an irreducible component
of degree one or two.  \smallskip

A line in the Grassmannian $Gr(2,5)$ parameterizes a pencil $L_t$ of
two-dimensional subspaces that contain a fixed vector $p$ and are
contained in a fixed three-dimensional subspace $S$ of $W$. In
particular, there exists a line in $Gr(2,5)$ through every point. Let
$\Omega_i$ be the two-dimensional subspace corresponding to the point
$p_i \in Gr(2,5)$. The span of a line $l$ through $p_i$ and the
remaining points of $\Gamma$ is a $\PP^4$ that intersects $Gr(2,5)$ in
a degree $5$ reducible curve containing $l$ as a component.
\smallskip

Conversely, suppose that $f$ has a reducible fiber containing a line
$l$. By Lemma \ref{d-points}, $l$ has to contain one of the points of
$\Gamma$. Assume without loss of generality that $l$ contains
$p_1$. The linear spaces parameterized by $l$ intersect $\Omega_1$
non-trivially. Consequently, every line passing through $p_1$ is
contained in the Schubert variety $\Sigma_2(\Omega_1)$. Furthermore,
every point of $\Sigma_2(\Omega_1)$ contains a line passing through
$p_1$.  The Schubert variety $\Sigma_2(\Omega_1)$ is a cone over the
Segre variety $\PP^1 \times \PP^2$ \cite{Coskun2011}. Restricted to
this locus, the map $f$ is the projection from the cone point. We
conclude that the fiber contains a line passing through $p_1$
precisely over fibers contained in a Segre embedding of $\PP^1 \times
\PP^2$ in $\PP^5$. In particular, this locus has codimension two.
\smallskip

We can view $Gr(2,5)$ also as $\mathbb{G}(1,4)$ parameterizing
projective lines in $\PP^4$. The lines parameterized by a conic in
$\mathbb{G}(1,4)$ sweep out a quadric surface in $\PP^4$.  If the
plane spanned by the conic is contained in $\mathbb{G}(1, \PP V)$,
then the quadric surface is singular or a double plane, depending on
the cohomology class of the plane. If the cohomology class of the
plane is $\sigma_{3,1}$, then the quadric surface is a cone. If the
cohomology class of the plane is $\sigma_{2,2}$, then the quadric
surface is a double plane. If the plane of the conic is not contained
in the Grassmannian, then the quadric surface is smooth. In
particular, if two points on the conic correspond to two skew-lines,
then the quadric surface is smooth.  Given two points $p,q$ in the
$\mathbb{G}(1, \PP V)$, the corresponding lines $L_p$ and $L_q$ in
$\PP^4$ span a $\PP^3$ or $\PP^2$. Since given any three lines in
$\PP^3$, there exists a (possibly reducible) quadric surface
containing them, we conclude that there is a conic through any two
points of $Gr(2,5)$.  \smallskip

Suppose that a fiber of $f$ contains an irreducible conic $C$ passing
through $p_1$ and $p_2$. Since the line joining $p_1$ and $p_2$ is not
contained in $Gr(2,5)$, we conclude that the plane spanned by $C$
cannot be contained in the Grassmannian. Thus the lines parameterized
by $C$ sweep out a smooth quadric surface. The lines $\PP \Omega_1$
and $\PP \Omega_2$ corresponding to the points $p_1$ and $p_2$ span a
$\PP^3$ in $\PP W$. Since the span of a quadric surface is $\PP^3$,
any quadric surface swept out by the lines parameterized by an
irreducible conic containing $p_1$ and $p_2$ must be contained in this
$\PP^3$. We conclude that the Zariski closure of the union of the
conics passing through $p_1$ and $p_2$ is contained in the Schubert
variety $\Sigma_{1,1}(F_{3} \subset F_4 = \overline{\Omega_1
  \Omega_2})$. By the previous paragraph, for every point $p$ in
$\Sigma_{1,1}(F_{3} \subset F_4 = \overline{\Omega_1 \Omega_2})$,
there exists a conic containing $p, p_1$ and $p_2$. Hence, the Zariski
closure of the union of conics in the fibers of $f$ containing $p_1$
and $p_2$ is precisely the Schubert variety $\Sigma_{1,1}(F_{3}
\subset F_4 = \overline{\Omega_1 \Omega_2})$. This Schubert variety is
isomorphic to $Gr(2,4)$ and is embedded in $\PP^9$ by the Pl\"{u}cker
embedding as a quadric fourfold \cite{Coskun2011}.  Its image under
$f$ is $\PP^3$ obtained by projecting $\Sigma_{1,1}(F_{3} \subset F_4
= \overline{\Omega_1 \Omega_2})$ from the line joining $p_1$ and
$p_2$. We conclude that over a $\PP^3$ in $\PP^5$, the fiber of $f$
contains a (possibly reducible) conic passing through $p_1$ and
$p_2$. In particular, fibers of $f$ are irreducible in codimension
two.  \smallskip

\noindent {\bf (2) $Z$ is the complete intersection of two quadric
  hypersurfaces in $\PP^{n+2}$.} Let $n \geq 3$. If $Z$ is a transverse
intersection of two smooth quadric hypersurfaces $Q_1$ and $Q_2$ in
$\PP^{n+2}$, then, by adjunction, $-K_Z = (n-1)H$. The linear system
$V \subset |H|$ is the set of hyperplanes that contain a general plane
$\Lambda$ in $\PP^{n+2}$ and $X$ is the blowup of $Z$ at the four
points $\Gamma = \{ p_1, \dots, p_4\} = Z \cap \Lambda$. The fibers of
$f: X \rightarrow \PP^{n-1}$ are quartic elliptic space curves. Hence,
reducible fibers consist of either the union of a line and a cubic or
two conics, where the cubic or either of the conics may be further
reducible.  \smallskip

In a smooth quadric hypersurface $Q$, the lines that pass through a
point $p \in Q$ sweep out the codimension one quadric cone $T_p Q \cap
Q$. Hence, the lines that pass through a point $p_i$ on $Z$ are
contained in $Y= T_{p_i}Q_1 \cap T_{p_i} Q_2 \cap Z$. Conversely,
since $Y$ is a cone with vertex at $p_i$, every point of $Y$ is
contained in a line passing through $p_i$. Since $Z$ is a transverse
intersection of the two quadrics $Q_1$ and $Q_2$, we conclude that $Y$
is a codimension two subvariety of $Z$.  \smallskip

Similarly, conics that pass through two points $p_1, p_2$ on $Z$ sweep
out a subvariety of $Z$ of codimension at least two. This can be
verified by a simple dimension count. In a homogeneous variety of the
form $Y = G/P$, where $G$ is a linear algebraic group and $P$ is a
parabolic subgroup, the space of rational curves in the class $\beta$
is irreducible and has dimension $-K_Y \cdot \beta + \dim(Y) - 3$
\cite{kim}. A quadric hypersurface in $\PP^{n+2}$ is homogeneous and
contains a $3n$-dimensional family of conics. By Kleiman's
Transversality Theorem \cite{kleiman}, the locus of conics that are
contained in the intersection of two quadrics has dimension
$3n-5$. Applying Kleiman's Transversality Theorem once more, we
conclude that the space of conics containing two general fixed points
has dimension $n-3$. Therefore, these conics sweep out at most a
codimension two subvariety of $Z$. The same argument bounds the
dimension of reducible fibers in the remaining examples, but in each
case we will give a much more explicit description of the reducible
fibers.
\medskip

\noindent{\bf (3) $Z$ is a smooth cubic hypersurface in $\PP^{n+1}$.} Let $n
\geq 3$.  If $Z$ is a smooth cubic hypersurface in $\PP^{n+1}$, then
$-K_Z = (n-1) H$, where $H$ denotes the hyperplane class. The linear
system $V \subset |H|$ is the set of hyperplanes that contain a
general line $\Lambda$ in $\PP^{n+1}$ and $X$ is the blowup of $Z$ at
the three points $\Gamma = \{ p_1, p_2, p_3\} = Z \cap \Lambda$. The
fibers of $f$ are plane cubic curves. Hence, any reducible fiber is a
union of a line and a conic, where the conic may also be reducible.
\smallskip

An {\em Eckardt point} $p$ on a cubic hypersurface $Z$ is a point
where $T_p Z \cap Z$ is a cone with vertex at $p$. A general cubic
hypersurface does not contain any Eckardt points and a smooth cubic
hypersurface can contain at most finitely many Eckardt points
\cite[Corollary 2.2]{coskun:cubic}. Since $\Lambda$ is a general line
and $Z$ has only finitely many Eckardt points, we may assume that the
points $\{p_1, p_2, p_3\} = Z \cap \Lambda$ are not contained in $T_p
Z \cap Z$ for any Eckardt point $p \in Z$.  \smallskip

Let $q \in Z$ be a point which is not an Eckardt point. Then the space
of lines in $Z$ passing through $q$ is a $(2,3)$ complete intersection
in $\PP T_p Z$ \cite[Lemma 2.1]{coskun:cubic}. Hence, the space of
lines passing through $q$ is a variety of dimension $n-3$. We conclude
that lines that pass through $q$ sweep out a subvariety of $Z$ of
dimension $n-2$.  Since every reducible fiber of $f$ consists of a
line passing through one of the points $p_1, p_2$ or $p_3$ and a
residual conic, we conclude that the reducible fibers of $f$ occur in
codimension 2.

\medskip

\noindent {\bf (4) $Z$ is  $\PP^2 \times \PP^2$.} Let $H_i$ denote the pullback of
the hyperplane class via the projection $\pi_i : Z \rightarrow
\PP^2$. Then $-K_Z = 3(H_1 + H_2)$. The linear system $V \subset |H_1
+ H_2|$ is the set of hyperplanes containing a general $\Lambda \cong
\PP^4$ in the Segre embedding of $\PP^2 \times \PP^2$ in
$\PP^8$. Since $(H_1 + H_2)^4 = 6$, $X$ is the blowup of $Z$ at the
six points $\Gamma = \{ p_1, \dots, p_6 \} = Z \cap \Lambda$. The
fibers of $f: X \rightarrow \PP^3$ are elliptic normal sextic curves
in $\PP^5$ and have class $3H_1^2H_2 + 3H_1H_2^2$. Hence, a reducible
fiber of $f$ has an irreducible component of degree $1$, $2$ or
$3$. These curves can have various cohomology classes.  \smallskip

First, if a fiber of $f$ contains a line $l$, then the cohomology
class of $l$ is either $H_1^2 H_2$ or $H_1 H_2^2$. Curves in these
classes are lines contained in a fiber of $\pi_1$ or $\pi_2$. By Lemma
\ref{d-points}, $l$ contains one of the points $p_i$. Since both
$\pi_1$ and $\pi_2$ have unique fibers containing $p_i$, we conclude
that $l$ has to be contained in one of the two $\PP^2$ containing
$p_i$. Conversely, the span of any line passing through $p_i$ and
$\Gamma$ is a $\PP^5$ containing $\Lambda$ and gives a reducible fiber
of $f$.  We conclude that the lines in the fibers of $f$ sweep out the
planes that contain the points $p_i$. In particular, the locus of
reducible fibers containing a line has codimension two in $Z$.
\smallskip

If the fiber contains an irreducible conic $C$, then $C$ has class
$H_1^2 H_2 + H_1H_2^2$. Note that an irreducible curve of class
$2H_1^2 H_2$ or $2 H_1 H_2^2$ has to be contained in a fiber of
$\pi_1$ or $\pi_2$, respectively. If two of the points $p_i, p_j$ were
contained in the same fiber of one of the projections $\pi_1$ or
$\pi_2$, then the line joining the two points would be contained in
$Z$. This would contradict the fact that $Z \cap \Gamma$ is a finite
set of points.  Hence, an irreducible curve with class $ 2H_1^2 H_2$
or $2 H_1H_2^2$ cannot contain two of the points of $\Gamma$. By Lemma
\ref{d-points}, we conclude that $C$ has class $H_1^2 H_2 + H_1
H_2^2$. The projection of $C$ to either factor is a line $L_i$. Hence,
$C$ is contained in $L_1 \times L_2 \subset \PP^2 \times \PP^2$. These
lines are determined by the image of the projection of the two points
$p_1$ and $p_2$ contained in $C$. Conversely, any conic contained in
$L_1 \times L_2$ and containing $p_1$ and $p_2$ lies in a fiber of
$f$. We conclude that the conics in the fibers of $f$ containing $p_1$
and $p_2$ sweep out the quadric surface $L_1 \times L_2$, hence form a
codimension two subvariety of $Z$.  \smallskip

Finally, if the fiber decomposes into a union of two irreducible
curves $C_1 \cup C_2$ of degree three, then the curves must have
classes $2H_1^2 H_2 + H_1 H_2^2$ and $H_1^2 H_2 + 2 H_1 H_2^2$,
respectively.  As in the previous paragraph, an irreducible cubic
curve passing through three of the base points cannot have class
$3H_1^2 H_2$ or $3H_1 H_2^2$. We can choose five of the points $p_1,
\dots, p_5$ in $\Gamma$ without any constraints. After reindexing, we
may assume that $C_1$ contains three of these points $p_1, p_2,
p_3$. The projection of $C_1$ by $\pi_2$ is a line and and the
projection of $C_1$ by $\pi_2$ is a conic. Since the projection of the
three points $p_1, p_2, p_3$ by $\pi_2$ are not collinear, there
cannot be any reducible fibers consisting of the union of two cubics.
\smallskip

We conclude that  the reducible fibers of $f$ occur in codimension 2.
\medskip

\noindent {\bf (5) $Z$ is the flag variety $F(1,2;3)$.} Let $Z=
F(1,2;3)$ be the flag variety parameterizing flags $W_1 \subset W_2$
in $k^3$, where $W_i$ is a subspace of dimension $i$. Alternatively, we can think of $Z$ as the variety
parameterizing pointed lines in $\PP^2$. The Picard group of $Z$ is
generated by the two Schubert divisors. Let $H_1$ denote the class of
the Schubert divisor parameterizing pointed lines $(p \in L)$ such
that $L$ contains a fixed point $q$ on $\PP^2$. Let $H_2$ denote the
class of the Schubert divisor parameterizing pointed lines $(p \in L)$
such that $p$ is contained in a fixed line $M$. Then $-K_Z = 2 (H_1 +
H_2)$. The linear system $V \subset |H_1 + H_2|$ is the set of
hyperplanes containing a general $\Lambda \cong \PP^4$ in the
Pl\"{u}cker embedding of $F(1,2;3)$ in $\PP^7$. Since $(H_1 + H_2)^3 =
6$, $X$ is the blowup of $Z$ at the six points $\Gamma = \{ p_1 ,
\dots, p_6 \} = Z \cap \Lambda$. The fibers of $f: X \rightarrow
\PP^2$ are elliptic normal sextic curves in $\PP^5$ and have class
$H_1^2 + 2H_1H_2 + H_2^2= 3H_1^2 + 3H_2^2$. A reducible fiber of $f$
has to contain an irreducible curve of degree one, two or three.
\smallskip

The flag variety $F(1,2;3)$ admits two projections $\pi_1: F(1,2;3)
\rightarrow Gr(1,3) \cong \PP^2$ and $\pi_2: F(1,2;3) \rightarrow
Gr(2,3) \cong (\PP^2)^*$. The reducible fibers can be easily described
by considering their projections via these two maps.  \smallskip

A line in $Z$ has cohomology class $H_1^2$ or $H_2^2$. Geometrically,
these classes parameterize pointed lines in $\PP^2$ where the line is
a fixed line $L$ or the point is a fixed point $q$,
respectively. Hence, there is a unique line of each kind passing
through a point $p_i \in Z$ parameterizing the pointed line $(q \in
L)$.  \smallskip

Conics in $Z$ can have cohomology class $H_1^2 + H_2^2$ or
$2H_i^2$. The projection of a conic with class $H_1^2 + H_2^2$  to both $\PP^2$ and $(\PP^2)^*$ is
a line. Therefore, there is a unique conic with class $H_1^2 + H_2^2$
containing two points $(q_1, L_1)$ and $(q_2, L_2)$.  It parameterizes
pointed lines $(q,L)$ such that $q \in \overline{q_1, q_2}$ and $L_1
\cap L_2 \in L$. There cannot be any conics in the class $2H_i^2$ that
pass through two points $p_1, p_2$ whose projections by $\pi_i$ are
distinct.  \smallskip

Finally, a cubic in $Z$ can have class $3H_i^2$ or $2H_i^2 +
H_j^2$. Note that there cannot be cubics of this type passing through
three general points. In the first case, the three points by the
projection $\pi_i$ has to coincide. In the second case, projection of
the curve under $\pi_i$ is a line. However, three general points are
not collinear under this projection. Hence, there are no reducible
fibers of this kind.  \smallskip

We conclude that $f$ has finitely many reducible fibers. 

\medskip

\noindent{\bf (6) $Z$ is $\PP^1 \times \PP^1 \times \PP^1$.} Let $H_i$
denote the pullback of the hyperplane class by the $i$-th projection
$\pi_i: Z \rightarrow \PP^1$. Then $-K_Z = 2(H_1 + H_2 + H_3)$. The
linear system $V \subset |H_1 + H_2 + H_3|$ is the set of hyperplanes
that contain a general $\Lambda \cong \PP^4$ in the Segre embedding of
$Z$ in $\PP^7$. Since $(H_1 + H_2 + H_3)^3 = 6$, $X$ is the blowup of
$Z$ at the six points $\Gamma = \{ p_1 , \dots, p_6 \} = Z \cap
\Lambda$. The fibers of $f: X \rightarrow \PP^2$ are elliptic normal
sextic curves in $\PP^5$ and have cohomology class $2(H_1H_2 + H_1H_3
+ H_2 H_3)$. A reducible fiber of $f$ must have an irreducible curve
of degree one, two or three.  \smallskip

The degree one component of a reducible fiber must have class $H_i H_j$. There is a
unique line in $Z$ in this class through every point. Curves of degree
two may have cohomology class $H_i H_j + H_i H_k$ or $2 H_i
H_j$. Curves in the latter class are necessarily reducible, so we can
concentrate on curves of degree two in the class $H_i H_j + H_i
H_k$. Since these curves have to pass through two points and these two
points do not have the same $i$-coordinate, we conclude that there
cannot be reducible fibers containing a curve of degree two. Finally, we consider
irreducible cubics in the fiber. These can have cohomology class $2H_i
H_j + H_i H_k$ or $H_1 H_2 + H_1 H_3 + H_2 H_3$. There cannot be
irreducible curves with the first class and there is a unique curve
with the second class passing through three points. We conclude that
$f$ has finitely many reducible fibers.

\smallskip Finally, we deal with the following slightly different case:
\smallskip

\noindent{\bf (7) $Z \arrow \PP^n$ is a double cover of $\PP^n$
  branched along a smooth quartic $Q$.} This case is
slightly different, since the line bundle $H=-\frac{1}{n-1} K_Z$
is ample but not very ample: $H$ is the pullback of $\OO_{P^n}(1)$
under the double cover map . Nevertheless, we can treat this case
in a very similar way to those above.
\smallskip

Let us denote the double cover by $d: Z \arrow \PP^n$. The linear
system $V \subset |H|$ is the pullback to $Z$ of the linear system of
hyperplanes through a given general point $p \in \PP^n$. On $Z$ this linear
system has base locus $\{p_1,p_2\} = d^{-1}{p}$, and $X$ is the blowup
of $Z$ at these two points. The fibers of $f$ are then preimages
$d^{-1}(L)$ of lines $L$ in $\PP^n$ through $p$: the preimage of a
line in $\PP^n$ is a double cover of $\PP^1$ branched over 4 points,
hence an elliptic curve.
\smallskip

A fiber $d^{-1}(L)$ of $f$ is reducible if and only if $L$ is a
bitangent line of the quartic $Q$ passing through $p$. Let us show
that the locus of such lines has codimension 2 in $\PP^{n-1}$. 
\smallskip

To see that it has codimension at least 2, it suffices to exhibit a curve in
$\PP^{n-1}$ (the space of lines in $\PP^n$ through $p$) which is
disjoint from the locus of bitangent lines. To do this, take a general
plane section of the quartic, which is a smooth quartic plane
curve. Such a curve has 28 bitangent lines (and these are exactly the
bitangents of $Q$ lying in the chosen plane).  A general point $p$ in the
plane does not lie on any of these bitangent lines, so the pencil of
lines in the plane through $p$ does not intersect the locus of
bitangents through $p$. 
\smallskip

To see that the locus of bitangents through $p$ has codimension equal
to 2, it suffices to show that this locus intersects any plane $\PP^2
\subset \PP^{n-1}$ in the space of lines through $p$. Such a plane
corresponds to a subspace $\PP^3 \subset \PP^n$, and this intersects $Q$ in a
quartic surface in $\PP^3$. The locus of bitangents to a quartic
surface covers all of $\PP^3$, so in particular there is such a
bitangent through any point. This completes the proof that the locus
of reducible fibers has codimension 2.
\smallskip

Finally, we note that any reducible fiber must be of the form $C_1 \cup
C_2$, where $C_1$ and $C_2$ are curves with $H \cdot C_i = 1$ and $p_i
\in C_i$.

\medskip

We summarize the preceding discussion in the following statement:

\begin{proposition} \label{prop-redfibers}
Let $Z$ be a Fano manifold of index $n-1$, and $V \subset |H|$ a
general linear system of dimension $n-1$. Let $f: X \arrow \PP^{n-1}$
be the elliptic fibration corresponding to $V$. Then $f$ has reducible
fibers in codimension 2.  
\end{proposition}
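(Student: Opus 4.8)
The statement is uniform across the del Pezzo manifolds, so the plan is to deduce it from Fujita's classification by treating cases (1)--(7) in turn, using in each case the structure already established: a fiber of $f$ is a degree-$r$ genus-one curve, and by Lemma \ref{d-points} any irreducible component $C$ of degree $d<r$ has $g(C)$ passing through exactly $d$ of the base points $p_1,\dots,p_r$. Since the blow-down $g$ is an isomorphism away from $\Gamma$, it suffices to bound the codimension in $Z$ of the locus swept out by all such components.

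First I would enumerate, in each case, the possible degrees and cohomology classes of the irreducible components of a reducible fiber. Because $r$ is small in every case ($r = 5, 4, 3, 6, 6, 6, 2$ respectively), the admissible components are curves of low degree --- lines, conics, and cubics --- and the numerical constraints together with Lemma \ref{d-points} pin down which base points must lie on which component. This reduces the problem to bounding, for each admissible component type, the dimension of the family of such curves on $Z$ through the prescribed number of base points, and showing that the union of these curves sweeps out a subvariety of $Z$ of codimension at least two, equivalently that its image under $f$ has codimension at least two in $\PP^{n-1}$.

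For the homogeneous cases --- $Gr(2,5)$, $\PP^2 \times \PP^2$, $F(1,2;3)$, $\PP^1 \times \PP^1 \times \PP^1$, and the ambient quadrics in case (2) --- the clean tool is the fact that on $G/P$ the space of rational curves in a class $\beta$ is irreducible of dimension $-K_Z \cdot \beta + \dim Z - 3$, which combined with Kleiman transversality bounds the dimension of the curves through two or three fixed general points. For the non-homogeneous cases (3) and (7) I would instead use the specific classical geometry: for the cubic hypersurface, the space of lines through a non-Eckardt point is a $(2,3)$ complete intersection of dimension $n-3$, and a general $\Lambda$ avoids the finitely many Eckardt points; for the double cover, reducible fibers correspond to bitangent lines of the branch quartic through $p$, and one shows this locus meets every plane $\PP^2 \subset \PP^{n-1}$ (via bitangents to a quartic surface) but misses a general pencil (via the $28$ bitangents of a plane quartic), giving codimension exactly two.

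The main obstacle is organizational rather than conceptual: one must rule out \emph{every} splitting type of reducible fiber, and the subtlest point is that the bound must account for all ways the base points can be distributed among the components. Here Lemma \ref{d-points} does the essential work, forcing each degree-$d$ component to absorb exactly $d$ base points and thereby making each individual dimension count tractable; in particular it is what excludes the problematic high-degree components whose existence would otherwise produce reducible fibers in codimension one. Collecting the bounds from all seven cases --- finitely many reducible fibers in cases (5) and (6), and codimension exactly two otherwise --- yields the proposition.
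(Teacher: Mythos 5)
Your proposal is correct and follows essentially the same route as the paper: Lemma \ref{d-points} to pin down how base points distribute among components, then a case-by-case bound on the locus swept out by low-degree components, using the $G/P$ dimension formula with Kleiman transversality for the homogeneous (and ambient-quadric) cases and the Eckardt-point and bitangent arguments for cases (3) and (7). The only difference is one of emphasis: the paper, while noting that the uniform dimension count suffices, also works out explicit descriptions of the flopping loci (Schubert varieties, tangent-hyperplane sections, etc.) because these are reused later when identifying base loci of divisors on the SQMs.
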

\section{The nef cone}\label{section-nef}
In this section, we calculate the nef cone of the variety
$X$, and thereby show that the first statement of Conjecture
\ref{conj-coneconjecture} is true for $X$. 
\smallskip

We preserve the notation from the previous section. Recall that $g: X
\rightarrow Z$ is the blowup of the Fano manifold $Z$ in $\Gamma = \{
p_1, \dots, p_r \}$ points.  Let $E_i$ denote the exceptional divisor
over $p_i$. Let $[r]$ denote the set $\{1, 2, \dots, r \}$. We will
abuse notation and denote the pullbacks $g^* H$ of divisor classes
simply by $H$.

\begin{theorem}\label{nef}
The nef cone of $X$ has the following description.
\begin{enumerate}
\item If $Z$ is a linear section of $Gr(2,5)$, then the nef cone of
  $X$ is the cone spanned by the divisor classes $\{ H - \sum_{i \in
    I} E_i \ | \ I \subseteq [5] \},$ where $H$ denotes the hyperplane
  class of $Gr(2,5)$.  \smallskip

\item If $Z$ is the smooth complete intersection of two quadrics, then
  the nef cone of $X$ is the cone spanned by $\{H - \sum_{i \in I} E_i
  \ | \ I \subseteq [4] \} ,$ where $H$ denotes the hyperplane class
  of $Z$.  \smallskip

\item If $Z$ is a smooth cubic hypersurface, then the nef cone of $X$
  is the cone spanned by $\{ H - \sum_{i \in I} E_i \ | \ I \subseteq
  [3] \},$ where $H$ denotes the hyperplane class of $Z$.  \smallskip

\item If $Z = \PP^2 \times \PP^2$, then the nef cone of $X$ is the
  cone spanned by the divisor classes $H_1, \ H_2, \ \mbox{and} \ \{
  H_1 + H_2 - \sum_{i \in I} E_i \ | \ I \subseteq [6] \},$ where
  $H_i$ denotes the pullback of $\OO_{\PP^2}(1)$ by the projection
  $\pi_i: Z \rightarrow \PP^2$.  \smallskip

\item If $Z = F(1,2; 3)$, then the nef cone of $X$ is the cone spanned
  by the divisor classes $H_1, \ H_2, \ \mbox{and} \ \{ H_1 + H_2 -
  \sum_{i \in I} E_i \ | \ I \subseteq [6] \},$ where $H_i$ denotes the
  two Schubert divisors.  \smallskip

\item If $Z = \PP^1 \times \PP^1 \times \PP^1$, then the nef cone of
  $X$ is the cone spanned by the divisor classes $H_1, \ H_2, H_3, \
  \mbox{and} \ \{ H_1 + H_2 + H_3 - \sum_{i \in I} E_i \ | \ I
  \subseteq [6] \},$ where $H_i$ denotes the pullback of
  $\OO_{\PP^1}(1)$ by the projection $\pi_i: Z \rightarrow \PP^1$.

\item If $Z$ is a double cover of $\PP^n$ branched along a smooth
  quartic, then the nef cone of $X$ is the cone spanned by the divisor
  classes $H$, $H-E_1$, $H-E_2$, $H-E_1-E_2$.
\end{enumerate}
\end{theorem}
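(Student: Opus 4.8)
The plan is to compute the nef cone of $X$ by dualizing against a generating set for the cone of curves $\overline{\text{NE}}(X)$. Since $X$ is obtained from a Fano variety $Z$ by blowing up finitely many points, and since we have an explicit geometric description of the curves on $X$ (in particular the fibers of $f$ and their reducible components), the strategy is to exhibit enough extremal curve classes to cut out exactly the claimed polyhedral cone, and then verify that each proposed generator of the nef cone is in fact base-point-free, hence nef. I will treat each case (1)--(7) essentially by the same two-step recipe: first produce the curves that bound the nef cone from outside (giving the "$\supseteq$" containment of the dual statement, i.e. every nef class lies in the claimed cone), then show the proposed generators are semi-ample (giving that the claimed cone consists of nef classes).

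\medskip

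For the concrete model, let me focus on the structure common to all cases. The Picard group of $X$ is $N^1(X) = g^*N^1(Z) \oplus \bigoplus_i \ZZ E_i$. The relevant curve classes are: the class $\ell_i$ of a line in the exceptional divisor $E_i \cong \PP^{n-1}$, which satisfies $\ell_i \cdot E_j = -\delta_{ij}$ and $\ell_i \cdot g^*D = 0$; the proper transforms of the minimal rational curves on $Z$ (lines for the projectively embedded cases, the rulings for the products); and crucially the \emph{reducible fiber components} of $f$ analyzed in Section~\ref{section-grassmannian}. Using Proposition~\ref{prop-redfibers} and the case-by-case analysis, every reducible fiber of $f$ splits into components of known degree $d$, and by Lemma~\ref{d-points} a degree-$d$ component $C$ meets exactly $d$ of the exceptional divisors. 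Thus a component of degree $d$ passing through $\{p_i : i \in I\}$ with $|I| = d$ has proper transform $\widetilde{C}$ with $\widetilde{C} \cdot (H - \sum_{j \in I} E_j) = d - d = 0$ while $\widetilde{C} \cdot (H - \sum_{j \in I'} E_j) < 0$ for any $I' \supsetneq I$. This is precisely the incidence that makes $H - \sum_{i \in I} E_i$ sit on a face of the nef cone: it pairs to zero with the curve classes $\ell_i$ ($i \in I$) and with the appropriate fiber components, and these relations force the extremal structure claimed in the theorem.

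\medskip

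For the second step I would show that each claimed generator is semi-ample, hence nef. The classes $H_i$ (pullbacks of $\OO(1)$ under a projection in cases (4)--(6)) are obviously base-point-free since they are pulled back from $Z$ and involve no $E_i$. For a class of the form $H - \sum_{i \in I} E_i$ (or $H_1 + H_2 - \sum_{i \in I}E_i$, etc.), I would exhibit it as the pullback of a semi-ample class under a birational contraction: blowing up only the \emph{subset} $\{p_i : i \in I\}$ of the base points yields a morphism to projective space (a linear subsystem of the relevant $|H|$), and the corresponding line bundle is globally generated because the base points impose independent conditions. Passing to the full blow-up $X$ only pulls back this base-point-free bundle, so it remains semi-ample and in particular nef. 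Running over all $I$ this produces every proposed extremal ray as a nef class. Since the curve classes from the first step are dual to exactly these generators, the two bounds match and the nef cone is exactly as stated.

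\medskip

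\textbf{The hard part} will be case (1), the Grassmannian $Gr(2,5)$, where the fiber of $f$ is a degree-five elliptic curve and reducible fibers can contain a line (one base point) \emph{or} a conic (two base points), both occurring in codimension two. I need to confirm that the codimension-two loci identified in Section~\ref{section-grassmannian} actually supply genuine curve classes whose numerical relations pin down all $2^5$ candidate generators $H - \sum_{i \in I}E_i$ and that no additional extremal rays appear; in particular I must check that the conic components (degree two through $\{p_i,p_j\}$) are needed to bound the rays indexed by two-element sets $I$, and that the line components bound the singletons, with the exceptional-line classes $\ell_i$ handling the rest. The remaining subtlety, uniform across cases, is verifying that the proposed cone is \emph{saturated} --- that there is no nef class lying strictly outside the polyhedral cone we have generated --- which I would settle by checking that the listed curve classes span a cone whose dual is exactly the claimed polyhedron, a finite linear-algebra computation in each $N^1(X)$.
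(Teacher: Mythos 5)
Your first step---bounding the nef cone by dualizing against explicit curve classes---is in substance the paper's argument: the paper pairs a general divisor $aH-\sum_i b_iE_i$ against the classes $e_i$ of lines in the exceptional divisors and the proper transforms $l-e_i$ of lines (or rulings) through a single base point, and exhibits an explicit convex decomposition showing the dual of the cone these span is exactly the claimed polyhedron. Your reducible fiber components $dl-\sum_{i\in I}e_i$ are nonnegative combinations of the classes $l-e_i$, so they add nothing to the linear algebra; they are not ``crucial'' here, and emphasizing them obscures the fact that the only existence statement you actually need is that of lines through each single base point, which Section \ref{section-grassmannian} supplies.

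The genuine gap is in your second step. The claim that $H-\sum_{i\in I}E_i$ is globally generated ``because the base points impose independent conditions'' is false for several of the listed generators, because the $r$ points of $\Gamma$ are far from linearly general position in $\PP^N$: they all lie in the base linear space $\Lambda$ of $V$, which has dimension $N-n=r-2$. Hence any $r-1$ of the points already span $\Lambda$, the hyperplanes through them are exactly the hyperplanes through $\Lambda$, and the omitted point is forced to be a base point. Concretely, in case (1) the complete linear system $|H-E_1-E_2-E_3-E_4|$ on $X$ has $E_5$ as a fixed component; in case (3) already $|H-E_1-E_2|$ has $E_3$ as a fixed component; and in case (7) the two base points map to the same point of $\PP^n$, so $|H-E_1|$ has $E_2$ as a fixed component. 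These classes are nef but \emph{not} globally generated, so your proposed proof of nefness fails exactly where it is needed. The paper instead writes each such generator as $D=-\frac{1}{n-1}K_X+\sum_{i\notin I}E_i$ and argues intersection-theoretically: an irreducible curve $C$ with $D\cdot C<0$ would have to lie in one of the added exceptional divisors $E_i\cong\PP^{n-1}$, where every effective curve class is a multiple of $e_i$, yet $D\cdot e_i\geq 0$, a contradiction; semi-ampleness is then extracted from the Base-Point-Free Theorem via a bigness computation for $D-\frac{1}{n-1}K_X$, not from global generation of the evident linear system. You would need to replace your second step by an argument of this kind.
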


\begin{proof}
  The exceptional divisors $E_i$ are isomorphic to $\PP^{n-1}$. Let
  $e_i$ denote the class of a line in $E_i$.  Let $l$ denote the class
  of a line in $Z$.  In cases (1), (2) and (3), there is a family of
  lines in $Z$ that contain exactly one of the points $p_i$. The
  proper transform of such a line in $X$ is an effective curve with
  class $l-e_i$.  The classes $$\{ l, e_1, \dots, e_r, l-e_1, \dots,
  l-e_r \}$$ span a subcone $C$ of the cone of curves $\Curv{X}$. The
  dual of this cone $\check{C}$ in $N^1(X)$ contains the nef cone. The
  cone $\check{C}$ also contains the divisors listed in the theorem.
  \smallskip

  Let $D$ be a divisor contained in $\check{C}$. We may express $D= aH
  - \sum_{i \in [r]} b_i E_i$. Since $D \cdot l = a$ and $D \cdot e_i
  = b_i$, we conclude that $a, b_i \geq 0$. After reordering the
  indices, we may assume that $b_1 \geq b_2 \geq \cdots \geq
  b_r$. Similarly, pairing $D$ with $j l - e_1 - \cdots - e_j$, we
  conclude that for any divisor $D \in \tilde{C}$, $j a \geq
  \sum_{i=1}^j b_i$. Hence, we can express $D$ as $$D = (a -
  \sum_{i=1}^r b_i) H + \sum_{j=1}^{r-1} (b_j - b_{j+1})(H -
  \sum_{i=1}^j E_i) + b_r ( H - \sum_{i=1}^r E_i).$$ We conclude that
  the dual cone $\check{C}$ is generated by the divisors listed in the
  theorem.  \smallskip

  Similarly, in cases (4) and (5), there are families of lines with
  class $l_1$ and $l_2$ dual to $H_1$ and $H_2$, respectively, in $Z$
  passing through exactly one of the points $p_i$. The proper
  transform of these lines in $X$ are effective curves with class
  $l_i- e_j $. The classes $$\{ l_1, l_2, e_1, \dots, e_r, l_1-e_1,
  \dots, l_1-e_r, l_2 - e_1, \dots, l_2 - e_r \}$$ span a subcone $C$
  of the cone of curves $\Curv{X}$. The dual of this cone $\check{C}$ in
  $N^1(X)$ contains the nef cone and the cone generated by the
  divisors listed in the theorem. In fact, by an identical argument,
  the divisors listed in the theorem generate the dual cone
  $\check{C}$. Given a divisor $D= a_1 H_1 + a_2 H_2 - \sum_{i=1}^r
  b_i E_i$ with $b_1 \geq \cdots \geq b_r$ in the dual cone
  $\check{C}$, by pairing with curves with class $j l_i - e_1- \cdots
  - e_j$, we conclude that $j a_i \geq \sum_{i=1}^j b_i$. Hence, we
  can express $D$ as
\begin{eqnarray*}
  D&=& (a_1 - b_1) H_1 + (a_2 - b_1) H_2 + \sum_{j=1}^{r-1} (b_j - b_{j+1})(H_ 1+ H_2 - \sum_{i=1}^j E_i) \\ &+& b_r ( H_1 + H_2 - \sum_{i=1}^r E_i).
\end{eqnarray*}

In case (6), there are families of lines with classes $l_1,
l_2$ and $l_3$ dual to $H_1, H_2$ and $H_3$, respectively, in $Z$
passing through exactly one of the points $p_i$. The proper transform
of these lines in $X$ are effective curves with classes $l_1 - e_i,
l_2 -e_i,$ and $l_3-e_i$. We thus obtain a subcone $C$ of the cone of
curves $\Curv{X}$ generated by curve classes $$\{l_1, l_2, l_3, e_1,
\dots, e_6, l_1 - e_1, \cdots, l_1-e_6, l_2-e_1, \dots, l_2-e_6, l_3 -
e_1, \dots, l_3 - e_6 \}.$$ As in the previous two paragraphs, the
divisors listed in the theorem generate the dual cone $\check{C}$.

Finally in case (7), the components of reducible fibers give curves on
$X$ with classes $l-e_1$, $l-e_2$, where the curve class $l$ satisfies
$H \cdot l =1$. So we obtain a subcone $C$ of the cone of curves
generated by $\{l-e_1,l-e_2,e_1,e_2\}$. Again, the divisors listed
generate the dual cone $\check{C}$. 

 \smallskip

To conclude the proof it remains to show that the divisors listed in
the theorem are nef. In cases (1)-(3) and (7), the divisor $H$, in cases
(4)-(5), the divisors $H_1$ and $H_2$ and in case (6), the divisors
$H_1, H_2, H_3$ are semi-ample, in particular nef, being the pullbacks of ample
divisors by morphisms. The divisor $$H - \sum_{i=1}^r E_i = -
\frac{1}{n-1} K_X$$ is base-point-free since this is the
divisor that defines the elliptic fibration $f: X \rightarrow
\PP^{n-1}$. All the other divisors listed in parts (1)-(6) of the
theorem are of the form $D= - \frac{1}{n-1} K_X + E_{i_1} + \cdots +
E_{i_j}$. If $C$ is an irreducible curve such that $C \cdot D < 0$,
then $C$ is necessarily contained in one of the exceptional divisors
$E_i$. However, since the exceptional divisor $E_i$ is a projective
space, any effective curve class on $E_i$ is proportional to
$e_i$. Since $e_i \cdot D \geq 0$, we obtain a contradiction. It
follows that the cone $\check{C}$ is the nef cone. Furthermore, by the
Base-Point-Free Theorem \cite[Theorem 3.3]{KollarMori1998}, all of
these divisors are semi-ample. Both $D$ and $D-\frac{1}{n-1}K_X$ are
nef, and moreover $D-\frac{1}{n-1}K_X=- \frac{2}{n-1}K_X +E_{i_1} +
\cdots + E_{i_j}$ has top self-intersection number
\begin{align*}
2^n H^n - 2^n \sum_{i \not\in \{i_1, \dots, i_j \} } E_i^n  -  \sum_{i \in \{i_1, \dots, i_j \} } E_i^n  = 2^n r - 2^n (r-j) - j = (2^n -1)j > 0
\end{align*}
so is big. Hence, by the Base-Point-Free Theorem, $D$ is semi-ample. 
\end{proof}

\begin{corollary}
The first statement of Conjecture \ref{conj-coneconjecture} holds for $X$.
\end{corollary}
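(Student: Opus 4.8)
The plan is to deduce part (1) of Conjecture \ref{conj-coneconjecture} as a purely formal consequence of Theorem \ref{nef}, whose real content is that the nef cone of $X$ is rational polyhedral. The first step is to check that the effective nef cone coincides with the full nef cone, i.e. $\Nef{X}^e = \Nef{X}$. In the proof of Theorem \ref{nef} every extremal generator of $\Nef{X}$ was shown to be semi-ample; a semi-ample class has a base-point-free multiple and is therefore effective, so each generator lies in $B^e(X)$. Since $B^e(X)$ is a convex cone, every nonnegative combination of these generators lies in it, whence $\Nef{X} \subseteq B^e(X)$ and $\Nef{X}^e = \Nef{X} \cap B^e(X) = \Nef{X}$. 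In particular $\Nef{X}^e$ is a full-dimensional rational polyhedral cone.

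The finiteness statement in part (1) is then immediate: a rational polyhedral cone has only finitely many faces, so a fortiori there are only finitely many $\Aut(X,\Delta)$-equivalence classes of faces corresponding to birational contractions or fiber space structures.

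For the fundamental domain, I would first observe that $\Aut^*(X,\Delta)$ is finite. It is a subgroup of $\operatorname{GL}(N^1(X)_\ZZ)$ preserving the full-dimensional rational polyhedral cone $\Nef{X}^e$, hence it permutes the finitely many extremal rays of that cone; an element fixing every ray must fix the primitive lattice generator of each ray, and since these generators span $N^1(X)$, such an element is the identity. Thus $\Aut^*(X,\Delta)$ embeds into the symmetric group on the extremal rays and is finite. Now a finite group $G$ acting linearly and lattice-preservingly on a rational polyhedral cone $C$ always admits a rational polyhedral fundamental domain: choose a rational linear functional $\ell$ on $N^1(X)$ fixed by no nontrivial element of $G$ (possible since $G$ is finite), and set $\Pi = \{ x \in C : \ell(x) \ge \ell(g x) \text{ for all } g \in G \}$. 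Then $\Pi$ is cut out of $C$ by finitely many rational half-spaces, hence is rational polyhedral; every $x \in C$ has a translate maximizing $\ell$ over its finite orbit and therefore lying in $\Pi$, giving $C = G \cdot \Pi$; and the genericity of $\ell$ forces the interiors of $\Pi$ and $g\Pi$ to be disjoint for $g \ne 1$. Taking $C = \Nef{X}^e$ produces the required cone $\Pi$.

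I do not expect a genuine obstacle here: all the substantive work resides in Theorem \ref{nef}, and the corollary merely repackages rational polyhedrality. The only points demanding care are the identification $\Nef{X}^e = \Nef{X}$, which rests on the semi-ampleness of the generators established in Theorem \ref{nef}, and the standard fact, invoked above, that a finite group acting on a rational polyhedral cone admits a rational polyhedral fundamental domain.
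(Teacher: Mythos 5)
Your proposal is correct and follows essentially the same route as the paper: identify $\Nef{X}^e=\Nef{X}$ via the semi-ampleness of the generators from Theorem \ref{nef}, deduce finiteness of $\Aut^*(X,\Delta)$ from its permutation action on the finitely many extremal rays of a full-dimensional rational polyhedral cone, and then invoke the standard fundamental-domain construction for a finite group acting on such a cone. The only difference is that you spell out the generic-linear-functional construction of $\Pi$, which the paper leaves as "straightforward."
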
 

\begin{proof} By Theorem \ref{nef},
$\Nef{X}^e$ equals $\Nef{X}$, a rational polyhedral cone. The
automorphism group $\Aut(X)$ acts on $N^1(X)$ as a subgroup of
$\operatorname{GL}(N^1(X)_\ZZ)$ and moreover preserves $\Nef{X}$. This implies that
the image of $\Aut(X)$ in $\operatorname{GL}(N^1(X)_\ZZ)$ must be finite, since any
automorphism $g$ must permute the primitive integral vectors on the
(finitely many) extremal rays of $\Nef{X}$, and this permutation
determines the image of $g$ in $\operatorname{GL}(N^1(X)_\ZZ)$.

Therefore, we have a finite group of integral linear transformations
acting on the rational polyhedral cone $\Nef{X}$. It is then
straightforward to produce a rational polyhedral fundamental domain
$\Pi$ for the action. 
\end{proof}

\begin{remark}
  In fact, one can show that in all but two of our examples the
  automorphism group $\Aut(X)$ is trivial, so that
  $\Nef{X}^e$ is the fundamental domain. The exceptions are
  the following
\begin{itemize}
\item The case $Z=Gr(2,5)$: here there are automorphisms inducing any
  permutation of the 5 base-points of the linear system $V$, so we get
  a group of automorphisms isomorphic to the symmetric group $S_5$.
\item The case $Z \arrow \PP^n$ is a double cover branched over a
  quartic: here we have the involution switching the two sheets of the
  covering. This restricts to the hyperelliptic involution on any
  smooth fiber of $X \arrow \PP^{n-1}$.
\end{itemize}
\end{remark}

\section{Flops of $X$} \label{section-flops}
In this section we record some facts we need about flops. We show that
the flops we need for the proof of Conjecture
\ref{conj-coneconjecture} do in fact exist, and prove a lemma about
classes of certain effective curves on flops. Finally, we show that in
our examples, flops preserve the property of having rational
polyhedral nef cone.

\begin{lemma} \label{lemma-flopclass} Let $Y$ be a projective
  variety, and let  $g: Y \dashrightarrow Y'$ be
  the flop of an extremal ray $R \subset \Curv{Y} \cap K^\perp$. Let
  $\Gamma$ be a curve which intersects exactly one curve $C$ with $[C]
  \in R$, and suppose the intersection is transverse and consists of
  $k$ points. Then the proper transform $\Gamma'$ of $\Gamma$ on $Y'$
  has numerical class $[\Gamma] + k [C]$.
\end{lemma}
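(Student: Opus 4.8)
The plan is to prove the identity of numerical classes by pairing both sides with an arbitrary divisor. Since $g$ is an isomorphism in codimension one, pushforward and pullback of divisors identify $N^1(Y)$ with $N^1(Y')$, and the dual identification matches $N_1(Y)$ with $N_1(Y')$. Under these identifications, writing $D'$ for the strict transform on $Y'$ of a Cartier divisor $D$ on $Y$, the desired equality $[\Gamma'] = [\Gamma] + k[C]$ is equivalent to
\[
D' \cdot_{Y'} \Gamma' = D \cdot_Y \Gamma + k\,(D \cdot_Y C)
\]
for every $D$. So the whole statement reduces to verifying this one relation, which is linear in $D$.

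To compute the left-hand side I would pass to a common resolution $p \colon \tilde{Y} \to Y$, $q \colon \tilde{Y} \to Y'$ of the flop, and let $\tilde{\Gamma}$ be the proper transform of $\Gamma$, which is also the proper transform of $\Gamma'$, so that $p_* \tilde{\Gamma} = \Gamma$ and $q_* \tilde{\Gamma} = \Gamma'$. The projection formula then gives $D \cdot \Gamma = p^* D \cdot \tilde{\Gamma}$ and $D' \cdot \Gamma' = q^* D' \cdot \tilde{\Gamma}$, so the difference of the two intersection numbers is $(q^* D' - p^* D) \cdot \tilde{\Gamma}$. The key structural point is that the flopping contraction and its flop are both small, so $q^* D' - p^* D$ is $p$-exceptional, i.e.\ supported on the exceptional locus of $p$, which lies entirely over the flopping locus. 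Consequently it meets $\tilde{\Gamma}$ only over the $k$ points of $\Gamma \cap C$.

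It then remains to evaluate the local contribution at each of these $k$ points. Since $\Gamma$ meets $C$ transversely and $C$ may be taken to be a general flopping curve, near each point the flop has its standard local model, with a single exceptional divisor $F$ whose two rulings are contracted by $p$ and $q$ respectively. Writing $q^* D' - p^* D = aF$ locally and testing against the ruling contracted by $q$ (whose image under $p$ is $C$) pins down the coefficient as $a = D \cdot C$; since $\tilde{\Gamma}$ crosses $F$ transversely once over each of the $k$ points, one obtains $(q^* D' - p^* D) \cdot \tilde{\Gamma} = k\,(D \cdot C)$, which is exactly the relation above.

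I expect the main obstacle to be the sign in this last step. A flop reverses intersection numbers along the contracted curve, so the genuine content of the lemma is that the exceptional correction is $+k[C]$ rather than $-k[C]$; getting this right is precisely what the explicit local model of the flop near a general point of $C$ controls. A useful sanity check is that for divisors with $D \cdot R = 0$ both sides must agree with no correction, since such a $D$ is numerically trivial on the contracted curves and the images of $\Gamma$ and $\Gamma'$ in the base of the contraction coincide; this confirms that the correction term is a multiple of $D \cdot C$, and the local computation fixes the multiple to be exactly $k$.
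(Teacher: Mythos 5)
Your argument is correct in outline but takes a genuinely different route from the paper's. Both proofs reduce the claim to the single numerical identity $D'\cdot\Gamma' = D\cdot\Gamma + k\,(D\cdot C)$ for divisors $D$, but the paper verifies it by brute force: it takes a general very ample $A$, counts the transverse intersection points of $A'$ with $\Gamma'$ on the open locus where the flop is an isomorphism (giving $A\cdot\Gamma$), and then asserts that $A'$ has multiplicity $A\cdot C$ along the flopped locus, so that the $k$ transverse crossings of $\Gamma'$ with that locus contribute $k(A\cdot C)$. You instead pass to a common resolution and study the $p$-exceptional divisor $q^*D'-p^*D$. Your closing observation is the cleanest part and is absent from the paper: any $D$ with $D\cdot R=0$ descends (numerically) to the base of the flopping contraction, where the images of $\Gamma$ and $\Gamma'$ agree, so the error term $D'\cdot\Gamma'-D\cdot\Gamma$ is a linear functional vanishing on $R^\perp$ and hence equals $\lambda\,(D\cdot C)$ for a single constant $\lambda$. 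This reduces the whole lemma to computing one number, which is a real structural gain over the paper's version.

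The one step where you should be more careful is the determination of $\lambda=k$ via ``the standard local model'' with a single exceptional divisor $F$ and two rulings. That is the Atiyah picture, valid near a flopping curve with normal bundle $\OO(-1)\oplus\OO(-1)\oplus\OO^{\oplus(n-3)}$; in the applications in this paper the flopping loci are positive-dimensional families of curves in varieties of dimension up to $6$, and $C$ is not a curve you get to choose --- it is the specific flopping curve that $\Gamma$ happens to meet --- so ``$C$ may be taken to be a general flopping curve'' is not available as stated. The computation does give the right sign in the local model (there $\OO(F)\vert_F$ has degree $-1$ on the ruling contracted by $q$, forcing $a=D\cdot C$), but you would need to either justify that this model applies at the points of $\Gamma\cap C$, or replace this step by the paper's device of testing against a general very ample divisor, whose strict transform can be analyzed directly. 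To be fair, the paper's own multiplicity assertion is no more rigorous at this point; but since your reduction has isolated the constant $\lambda$ so cleanly, it deserves an equally clean evaluation.
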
 \begin{proof} Let $U \subset Y$ and $U' \subset Y'$ be the
  two maximal open subsets that are isomorphic under the flop $g$.
  Choose a very ample divisor $A$ which intersects $C$ and $\Gamma$
  transversely and is disjoint from $C \cap \Gamma$. Let $A'$ denote
  the strict transform of $A$ in $Y'$.  Since $g$ is an isomorphism on
  $U$, we get $A \cdot \Gamma$ transverse intersection points of $A'$
  and $\Gamma'$ on $U'$.  Let $C'$ be the cocenter of the flop. Next,
  we must consider intersection points of $A'$ and $\Gamma'$ along
  $C'$. The divisor $A'$ has multiplicity $A \cdot C$ at every point
  of $C'$, and $\Gamma'$ meets $C'$ transversely in $k$ points. So for
  a general choice of $A$, we obtain a further contribution of $k(A
  \cdot C)$ to the intersection number $A' \cdot \Gamma'$.  We
  conclude that for a general very ample divisor $A$, we have
\begin{align*}
A' \cdot \Gamma' &= A \cdot \Gamma + k ( A \cdot C).
\end{align*}
Since this is true for all general very ample divisors $A$, and these
span $N^1(Y)$, we conclude that
\begin{align*}
[\Gamma'] = [\Gamma] + k[C].
\end{align*}
\end{proof}

\smallskip We will use this lemma to calculate the numerical classes
of fiber components on flops of our varieties. These classes will be
important in later sections, since they give bounds on the nef cones
of these flops. We will see in the next section that in each case it
suffices to consider a short list of flops. The numerical classes of
fiber components on these flops are recorded below. In each case, $F$
denotes the class of a fiber of the elliptic fibration $f: X \arrow
\PP^{n-1}$.

\begin{enumerate}
\item {\bf $Z=Gr(2,5)$}. In this case, we need to flop the locus of lines,
  conics, and cubics through one of the base-points $p_1$. This creates
  numerical classes of fiber components of the following kinds:
\begin{align*}
 F+(l-e_1), \quad & F+(2l-e_1-e_i), \quad  F + (3l-e_1-e_i-e_j).
\end{align*}

\item {\bf $Z$ is an intersection of quadric hypersurfaces in
    $\PP^{n+2}$.} In this case, we need to flop the locus of lines and conics
  through a given point $p_1$; it is not necessary to flop cubics. As
  in the previous case, this creates classes of the following kinds:
\begin{align*}
 F+(l-e_1), \quad & F+(2l-e_1-e_i).
\end{align*}
\item {\bf $Z$ is a cubic hypersurface in $\PP^{n+1}$}. In this case,
  we will not need to perform any flops on $X$. \smallskip

\item {\bf $Z=\PP^2 \times \PP^2$.} Here it will be necessary to perform
  longer sequences of flops: it may be that we need to flop a
  component of a fiber, then on the new space flop the proper
  transform of the other component, and so on. One can apply Lemma
  \ref{lemma-flopclass} repeatedly to show that the result is always a
  class of the form $mF+C$, where $C$ is the original class we
  flopped, and $m$ is a positive integer. In our cases the relevant
  classes on these flops are 
\begin{align*}
mF &+ (l_1-e_1) \, (m=1,2) \\
mF &+ (l_1+l_2-e_i-e_j) \, (m=1,2,3) \\
mF &+ (2l_1+l_2-e_1-e_j-e_k) \, (m=1,\ldots,4)
\end{align*}

\item {\bf $Z=F(1,2;3)$, $Z=\PP^1 \times \PP^1 \times \PP^1$.} In these
  cases, we will avoid explicit calculations by using facts
  specific to threefolds.
\end{enumerate}

\smallskip

\begin{lemma} \label{lemma-flops} Let $X$ be one of our examples. Let
  $R$ be an isolated extremal ray of $\Curv{X}$ which lies in $K_X^\perp$. Then
  the flop of $R$ exists. More generally, if $X'$ is an SQM of $X$
  obtained by a sequence of flops of classes in $K^\perp$, and $R$ is
  an isolated extremal ray of $\Curv{X'}$ which lies in $K_{X'}^\perp$, then
  the flop of $R$ exists.
\end{lemma}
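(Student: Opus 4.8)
The plan is to realize each such flop as a relative flop over the base $\PP^{n-1}$ of the elliptic fibration, reducing everything to the existence of flips. Since $-K_X = (n-1)(H - \sum_i E_i)$ is the pullback of an ample class under $f: X \arrow \PP^{n-1}$, the condition $K_X \cdot R = 0$ says exactly that the curves in $R$ are contracted by $f$; equivalently, $R$ is an isolated extremal ray of the relative cone $\Curv{X} \cap K_X^\perp$, and $K_X \equiv_f 0$. I would work throughout relative to $f$ and fix a klt Calabi--Yau structure $(X,\Delta)$ as constructed in the introduction, so that $K_X + \Delta \equiv_f 0$.

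The first step is to construct the flopping contraction of $R$ over $\PP^{n-1}$. Because $R$ is isolated, I would choose a rational divisor class $L$ that is nef over $\PP^{n-1}$, vanishes on $R$, and is strictly positive on the rest of $\Curv{X} \cap K_X^\perp$. The key point is that $L$ is $f$-big: the general fiber $F$ is an irreducible elliptic curve, and computing $L \cdot F$ on a reducible fiber (which exists in codimension two by Proposition \ref{prop-redfibers}) as a sum over its components shows $L \cdot F > 0$, since the component in $R$ contributes $0$ while the residual components contribute positively. Then $aL - (K_X + \Delta) \equiv_f aL$ is $f$-nef and $f$-big, so the relative Basepoint-Free Theorem makes $L$ $f$-semiample and produces a contraction $\phi_R : X \arrow W$ over $\PP^{n-1}$ contracting precisely the curves in $R$.

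Next I would verify that $\phi_R$ is small and invoke the existence of flips. The curves in $R$ are components of reducible fibers, which by Proposition \ref{prop-redfibers} lie over a codimension-two locus in $\PP^{n-1}$ and therefore sweep out a subvariety of codimension at least two in $X$; hence $\phi_R$ is a $(K_X + \Delta)$-trivial flopping contraction with $\rho(X/W) = 1$. Choosing an effective divisor $D$ with $-D$ $\phi_R$-ample, for small $\epsilon > 0$ the pair $(X, \Delta + \epsilon D)$ is klt and $-(K_X + \Delta + \epsilon D)$ is $\phi_R$-ample. By the existence of flips \cite{BCHM}, the $(K_X + \Delta + \epsilon D)$-flip over $W$ exists; since $K_X + \Delta \equiv_f 0$, this flip is exactly the flop $X^+$ of $R$, and $X^+$ is again $\QQ$-factorial and klt with $K_{X^+} \equiv 0$ relatively over $\PP^{n-1}$.

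For the second assertion, any SQM $X'$ obtained by flopping classes in $K^\perp$ is again $\QQ$-factorial and klt, carries an elliptic fibration to the same $\PP^{n-1}$ (the flops are performed over $\PP^{n-1}$), and has $-K_{X'}$ semiample; moreover, since a flop is an isomorphism in codimension one, the locus of reducible fibers of $X' \arrow \PP^{n-1}$ still has codimension two. Hence the identical argument applies verbatim with $X$ replaced by $X'$. I expect the main obstacle to be confirming that the contraction is genuinely small rather than divisorial, together with the $f$-bigness of the supporting class $L$: these are precisely the points where Proposition \ref{prop-redfibers} is indispensable, since $f$-bigness is what permits the Basepoint-Free Theorem to be applied even though $K_X$ itself is $f$-trivial.
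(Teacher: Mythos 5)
Your route to the contraction of $R$ is genuinely different from the paper's and is essentially sound: you build a supporting class $L$ and obtain $\phi_R : X \arrow W$ directly from the relative Base-point-free Theorem over $\PP^{n-1}$, using Proposition \ref{prop-redfibers} both for the $f$-bigness of $L$ and for the smallness of $\phi_R$, whereas the paper gets the contraction as a byproduct of the Cone Theorem applied to a perturbed pair $(X,\Delta+\epsilon D)$. (For the existence of $L$ you are implicitly using that the face $\Curv{X}\cap K_X^\perp$ is generated by the finitely many classes of fiber components, so that the isolated extremal ray $R$ can be cut out by a single relatively nef class; that is fine, and your computation of $L\cdot F$ on a reducible fiber works because, by Lemma \ref{d-points}, the residual components of such a fiber have classes different from those in $R$.)

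The gap is the sentence ``Choosing an effective divisor $D$ with $-D$ $\phi_R$-ample.'' Since $\rho(X/W)=1$, this is the assertion that there exists an \emph{effective} divisor with $D\cdot R<0$, and that is the real crux of the lemma, not the smallness or $f$-bigness you flag at the end. If no such effective $D$ existed, the boundary $\Delta+\epsilon D$ would fail to be effective, $(X,\Delta+\epsilon D)$ would not be a klt pair in the sense required by \cite[Corollary 1.4.1]{BCHM}, and the existence of flips could not be invoked; and the existence of such a $D$ is not automatic --- it fails, for instance, for a ray spanned by a moving curve. The paper's proof spends its entire second half on exactly this point: by the duality of Boucksom--Demailly--P\u{a}un--Peternell \cite{BDPP} between the pseudoeffective cone and the closed cone of moving curves, it suffices to show that $R$ is not contained in the closed cone of moving curves, which follows because every curve with class in $R$ is a fiber component sweeping out a locus of codimension at least $2$ (Proposition \ref{prop-redfibers}), hence is not moving, and because $R$ is an \emph{isolated} extremal ray, hence not a limit of moving classes. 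You already have all the needed inputs (you use the codimension-$2$ statement to prove smallness), but you must insert this BDPP step to produce the effective $D$; note that this is also where the isolatedness hypothesis does its essential work.
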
 \begin{proof} For any $X'$ as in the statement, the
  anti-canonical divisor $-K_{X'}$ is semi-ample, so there exists a
  $\QQ$-divisor $\Delta$ such that $(X',\Delta)$ is a klt Calabi--Yau
  pair. Given a ray $R$ as in the statement of the lemma, it suffices
  to find an effective divisor $D$ such that $D \cdot R <0$. For then,
  $(X,\Delta + \epsilon D)$ is a klt pair for sufficiently small
  $\epsilon$, and $R$ is a $(K_X+\Delta+\epsilon D)$-negative extremal
  ray. By the Cone Theorem, the contraction of $R$ exists. Therefore,
  by Birkar--Cascini--Hacon--McKernan \cite[Corollary 1.4.1]{BCHM},
  the flop of $R$ exists.  \smallskip

To show that the necessary effective divisor $D$ exists, we use the
theorem of Boucksom--Demailly--P\u{a}un--Peternell \cite[Theorem
0.2]{BDPP}, which asserts that the cone of pseudoeffective divisors
$\overline{\operatorname{Eff}(X')}$ is dual to the cone of moving
curves on $X'$. Now suppose $R$ is an extremal ray in $K^\perp$
spanned by a curve $C$. Any curve $C'$ numerically a multiple of $C$
must be contained in a fiber of the elliptic fibration $f: X
\rightarrow \PP^{n-1}$, and moreover cannot be a whole fiber (since
the class $F$ does not span an extremal ray). Since $X$ has reducible
fibers in codimension 2, the same is true of any flop $X'$. Hence, in
particular, the curve $C$ cannot be moving. Since the ray $R$ spanned
by $C$ is an isolated extremal ray, the closure of the cone of moving
curves does not contain $R$. By the theorem of
Boucksom--Demailly--P\u{a}un--Peternell, there is an effective divisor $D$
with $D \cdot C <0$, as required. \end{proof}

\section{The movable cone} \label{section-movablecone} In this
section, we prove part (2) of Conjecture \ref{conj-coneconjecture} for
our examples.  We first study the action of the pseudo-automorphisms
$\PsAut(X/ \PP^{n-1})$ preserving the elliptic fibration $f: X
\rightarrow \PP^{n-1}$ on the relative effective movable cone
$\Mov{X/\PP^{n-1}}^e$. We find a rational polyhedral fundamental domain
for this action.  We further show that the union of the nef cones of
finitely many SQMs lie over this fundamental domain. Since these nef
cones are rational polyhedral cones, we obtain a rational polyhedral
fundamental domain for the action of pseudo-automorphisms on
$\Mov{X}^e$.  We use a combination of general results from minimal
model theory and explicit analysis of our examples. \smallskip

As explained in the introduction, the effective nef cones of all the
SQMs of a variety $X$ are contained in $\Mov{X}^e$. The next lemma
shows that in our examples the converse is also true: the effective
movable cone is the union of the effective nef cones of the SQMs of
$X$. Given a contraction morphism $f: X \arrow Y$, an effective
divisor $D$ on $X$ is called {\it $f$-vertical} if $f_*(D) \neq Y$.

\begin{lemma}\label{logflip}
  Let $X$ be a smooth projective variety such  that some power
  of $-K_X$ defines a contraction morphism $f: X \arrow Y$ with
  $\operatorname{dim } Y = \operatorname{dim }X -1$. Suppose
  that every $f$-vertical divisor on $X$ is a multiple of $-K_X$. Then
\begin{align*}
\Mov{X}^e &= \bigcup \Nef{X'}^e,
\end{align*}
where the union on the right is over all SQMs $X' \dashrightarrow X$.
\end{lemma}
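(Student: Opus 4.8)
The inclusion $\bigcup \Nef{X'}^e \subseteq \Mov{X}^e$ is already explained in the introduction: under the identification of $N^1(X')$ with $N^1(X)$ coming from an SQM, the pullback of an ample class (hence of a nef effective class) on $X'$ is movable and effective on $X$. So the whole content lies in the reverse inclusion, and the plan is to show that every $D \in \Mov{X}^e$ becomes nef on some SQM $X' \dashrightarrow X$. To set up the pair, note that since $-K_X$ is semi-ample we may fix, as in \S\ref{section-introduction}, an effective $\QQ$-divisor $\Delta$ with $(X,\Delta)$ klt Calabi--Yau, so that $K_X+\Delta\equiv 0$ and running a $D$-MMP coincides with running a $(K_X+\Delta+\epsilon D)$-MMP for $0<\epsilon\ll 1$.

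I would first dispose of the ``vertical'' classes using the hypothesis. The generic fiber $F$ of $f$ is a curve, and movability gives $D\cdot F\ge 0$ by the duality of Boucksom--Demailly--P\u{a}un--Peternell \cite{BDPP}. If $D\cdot F=0$, then $D$ cannot have a component dominating $Y$, since such a component meets the general fiber positively; hence $D$ is $f$-vertical, and by hypothesis $D$ is a multiple of $-K_X$, which is nef, so $D\in\Nef{X}^e$ already. I may therefore assume $D\cdot F>0$, that is, $D$ is $f$-big.

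Now run the $D$-MMP. Two kinds of extremal contraction must be excluded. A divisorial contraction would contract a divisor $S$ swept out by a covering family of $D$-negative curves; but such curves form a movable curve class, so $D\cdot C\ge 0$ by \cite{BDPP}, contradicting $D\cdot C<0$. Likewise, a Mori fiber space output $\phi\colon X'\to W$ would force the strict transform $D'$ to be anti-ample on a general fiber of $\phi$, again a covering family of $D'$-negative curves, contradicting that $D'$ remains movable and pseudoeffective. Thus every step is a flip of a small $(K_X+\Delta)$-trivial contraction, i.e. a flop, and each flop is an SQM; the required flops exist by Lemma \ref{lemma-flops} (equivalently \cite[Corollary 1.4.1]{BCHM}). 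If this MMP terminates, it terminates at a minimal model $X'$ on which $D$ is nef, which is exactly what we want.

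The main obstacle is therefore termination, and I would overcome it by exploiting the relative structure over $Y$ together with the hypothesis. Since $D$ is $f$-big, I would first run the relative $D$-MMP over $Y=\PP^{n-1}$, which terminates by relative BCHM: the relative dimension is $1$, and by Proposition \ref{prop-redfibers} the flopped curves lie in reducible fibers, hence in codimension two, so these are genuine SQMs, producing $X'$ with $D$ relatively nef. It then remains to pass from relatively nef to nef. For this I would walk down the segment $D+t(-K_{X'})$ with $t\ge 0$: for $t\gg 0$ this class is nef, being relatively nef plus a large multiple of the pullback $-K_{X'}=(n-1)f^{*}\OO_Y(1)$ of an ample class, and as $t$ decreases the segment stays in $\Mov{X}$ and crosses walls of the rational polyhedral nef cones of successive flops, which exist and remain rational polyhedral by Theorem \ref{nef} and the results of \S\ref{section-flops}. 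The crux is to show that only finitely many such walls are crossed before reaching $t=0$; here the hypothesis is essential, since it identifies the unique vertical boundary direction of $\Mov{X}$ with the single nef ray spanned by $-K_X$, which prevents the chambers from accumulating as $t\to 0$. Once this finiteness is in hand, the last chamber $\Nef{X'}^e$ is closed and contains the endpoint $D$, completing the inclusion $\Mov{X}^e\subseteq\bigcup\Nef{X'}^e$.
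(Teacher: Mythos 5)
Your reduction to the case $D\cdot F>0$ via the hypothesis on vertical divisors, and your observation that movability rules out divisorial contractions and Mori fiber space outputs so that any $D$-MMP consists only of flops, match the paper's strategy. But the step you yourself identify as ``the crux'' --- that only finitely many walls are crossed as $t\to 0$ along the segment $D+t(-K_{X'})$ --- is precisely the termination problem, and you do not prove it; the appeal to ``the hypothesis prevents the chambers from accumulating'' is an assertion, not an argument, and no general termination theorem applies to scaling by $-K_{X'}$, which is semi-ample but not big. The paper sidesteps termination entirely by a different mechanism: it chooses a smooth $\Delta\in|-rK_X|$, sets $\Theta=\frac{1}{r}\Delta$, and forms the klt pair $(X,\Theta+\epsilon D)$. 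Since $D$ is $f$-big, \cite[Lemma 3.23]{KollarMori1998} gives that $-K_X+\epsilon D\equiv\Theta+\epsilon D$ is big, while $K_X+\Theta+\epsilon D\equiv\epsilon D$ is effective; BCHM's existence of minimal models for klt pairs with \emph{big} boundary then produces a minimal model $X'$ in one stroke, on which $\epsilon D$ (hence $D$) is nef, and movability of $D$ forces $X\dashrightarrow X'$ to be an SQM. Making the boundary big is the idea your proposal is missing.

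Two smaller points. First, your exclusion of divisorial contractions via \cite{BDPP} is not quite right: curves sweeping out only a divisor do not form a movable curve class, so the duality theorem does not apply to them; the correct argument is simply that a $D$-negative divisorial contraction would put the contracted divisor in the stable base locus of $D$, contradicting $D\in\Mov{X}$. Second, your termination argument leans on Proposition \ref{prop-redfibers}, Theorem \ref{nef} and the results of Section \ref{section-flops}, which concern the specific examples, whereas Lemma \ref{logflip} is stated (and proved in the paper) for an arbitrary smooth $X$ satisfying its hypotheses; the BCHM route requires none of that example-specific input.
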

\begin{proof} We must show that if $D$ is any effective $\RR$-divisor
  on $X$ whose numerical class lies in $\Mov{X}$, then that class
  belongs to the nef cone of some SQM of $X$. Since, by assumption,
  $-K_X$ is already semi-ample on $X$, we may assume that $D$ is not a
  multiple of $-K_X$. Since there are no $f$-vertical divisors other
  than multiplies of $-K_X$, we have $D \cdot F >0$, where $F$ is the
  generic fiber of $f: X \arrow Y$. Therefore, $D$ is $f$-big.  Hence,
  by \cite[Lemma 3.23]{KollarMori1998}, the $\RR$-divisor $-mK_X+D$ is
  big for sufficiently large values of $m$. Dividing by $m$, we see
  that $-K_X+\epsilon D$ is big for sufficiently small positive values
  of $\epsilon$.

Now choose a positive integer $r$ such that $-rK_X$ is base-point-free,
and let $\Delta$ be a smooth divisor in the linear system
$|-rK_X|$. Setting $\Theta = \frac{1}{r} \Delta$, the pair
$(X,\Theta)$ is klt, hence so is $(X, \Theta + \epsilon D)$ for
sufficiently small positive numbers $\epsilon$. Moreover, 
\begin{align*}
\Theta + \epsilon D &\equiv -K_X + \epsilon D
\end{align*}
is big for sufficiently small $\epsilon$ by the previous paragraph,
and
\begin{align}\label{DDD}
K_X + \Theta + \epsilon D &\equiv \epsilon D
\end{align}
is effective. By Birkar--Cascini--Hacon--McKernan \cite{BCHM}, the
pair $(X, \Theta+\epsilon D)$ has a minimal model $X'$. By Equation
(\ref{DDD}), the proper transform of $D$ on $X'$ is nef. Finally,
since $D \in \Mov{X}$, there cannot be a $D$-negative divisorial
contraction, so the birational map $X \dashrightarrow X'$ is an
SQM.  \end{proof}

\begin{corollary}
The conclusion of Lemma \ref{logflip} holds for each of our examples
$X$. \end{corollary}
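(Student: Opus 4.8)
The corollary asks to verify that each of our seven examples $X$ satisfies the hypotheses of Lemma \ref{logflip}. There are exactly two conditions to check: first, that some power of $-K_X$ defines a contraction morphism $f \colon X \to Y$ with $\dim Y = \dim X - 1$; and second, that every $f$-vertical divisor on $X$ is a multiple of $-K_X$.

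The plan is to dispatch the first condition by appealing directly to the setup established in Section \ref{section-grassmannian}. In each of our cases we constructed the elliptic fibration $f \colon X \to \PP^{n-1}$ defined by the base-point-free system $-\frac{1}{n-1} K_X = H - \sum_{i \in [r]} E_i$. Since $\dim \PP^{n-1} = n-1 = \dim X - 1$, and since $f$ is a morphism with connected fibers to a normal projective variety (an elliptic fibration), it is a contraction morphism in the required sense, defined by a suitable power of $-K_X$. Thus the first hypothesis holds uniformly across all seven examples.

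The real content is the second condition: every $f$-vertical divisor must be numerically a multiple of $-K_X$, equivalently of the fiber class pulled back, i.e. of $H - \sum_{i \in [r]} E_i$. The plan is to argue as follows. An $f$-vertical divisor $D$ is one whose image $f_*(D)$ is a proper subvariety of $\PP^{n-1}$; since $\PP^{n-1}$ has Picard number one, any prime divisor on the base pulls back to a rational multiple of the fiber class, so it suffices to show that every prime $f$-vertical divisor on $X$ is such a pullback. The key geometric input is that $f$ has irreducible fibers in codimension two, which is precisely the content of Proposition \ref{prop-redfibers}. This means the locus over which fibers are reducible has codimension at least two in $\PP^{n-1}$, so a prime divisor cannot be supported over it; hence a prime $f$-vertical divisor must dominate a prime divisor in the base, and therefore equals the pullback of that divisor up to components contained in fibers. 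Since those fibral components live in codimension two downstairs, they contribute nothing to the divisor class, and we conclude $D$ is numerically proportional to $-K_X$.

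The main obstacle I expect is making precise the claim that there are no ``extra'' vertical divisorial components concentrated over the reducible-fiber locus. The point to be careful about is that even though reducible fibers occur only in codimension two on the base, in principle a divisor on $X$ could be swept out by components of these reducible fibers; one must verify this cannot produce a genuine divisor (codimension one in $X$) whose class is independent of $F$. This is where Proposition \ref{prop-redfibers} does the essential work: because the reducible locus downstairs has codimension $\geq 2$, the union of reducible-fiber components over it has dimension at most $(n-2) + 1 = n-1 < n = \dim X$, so it cannot contain a divisor. Therefore no such anomalous vertical divisor exists, every $f$-vertical prime divisor is a pullback from the base, and the hypothesis of Lemma \ref{logflip} is satisfied in every case.
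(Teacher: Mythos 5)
Your argument is essentially the paper's own proof: both verify the vertical-divisor hypothesis by combining equidimensionality of $f$ with Proposition \ref{prop-redfibers} to show that any prime $f$-vertical divisor is the full preimage of a prime divisor on $\PP^{n-1}$, and then use that the image of $\Pic(\PP^{n-1})$ in $\Pic(X)$ is generated by a rational multiple of $-K_X$. One small slip: your dimension count for the locus swept out by components of reducible fibers is off by one. The reducible-fiber locus has codimension $2$ in $\PP^{n-1}$, hence dimension $n-3$, so the union of the (one-dimensional) fibers over it has dimension at most $n-2$; you wrote $(n-2)+1 = n-1 < n = \dim X$, but a subset of dimension $n-1$ \emph{could} be a divisor, so the inequality you state does not by itself rule out an anomalous vertical component --- the correct bound $n-2 < n-1$ does. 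With that correction the argument is complete.
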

\begin{proof}
  By construction each of our examples has an elliptic fibration $f: X
  \arrow \PP^{n-1}$. We need to show that there are no $f$-vertical
  divisors other than multiples of $-K_X$. Each fibration $f: X \arrow
  \PP^{n-1}$ is equidimensional, so any irreducible vertical divisor
  $D$ must be a component of a subset $f^{-1}(Y)$ where $Y \subset
  \PP^{n-1}$ has codimension 1. By Proposition \ref{prop-redfibers},
  the fiber of $f$ over any codimension-one point is irreducible, so we
  must have $D=f^{-1}(Y)$, in other words some multiple of $D$ is
  pulled back from a divisor on $\PP^{n-1}$. The image of
  $\Pic(\PP^{n-1}) \hookrightarrow \Pic(X)$ is generated by some
  (rational) multiple of $-K_X$, so $D = -qK_X$ for some rational
  number $q$, completing the proof.
\end{proof}

The next lemma exploits the combinatorial structure of the chamber
decomposition of the movable cone to reduce the problem of
finding a fundamental domain to a more ``local'' one. The upshot is
that we do not need to understand all SQMs of a given variety $X$, but
only those which are ``close'' to $X$.
\begin{lemma} \label{lemma-conestrick}
Let $X$ be a variety such that 
\begin{align*}
\Mov{X}^e &= \bigcup \Nef{X'}^e,
\end{align*}
where the union on the right is over all SQMs $X' \dashrightarrow
X$. Suppose that there is a collection of SQMs $\{X_i\}_{i \in I}$ with the
following property: for each SQM $X_\alpha$ of $X$ such that
$\Nef{X_\alpha}$ shares a codimension-one face with one of the cones
$\Nef{X_i}$, there exists an SQM pseudo-automorphism $\varphi$ of $X$
such that $\varphi_*(\Nef{X_\alpha}) \subset U$, where $U$ denotes the
union $\bigcup_{i \in I} \Nef{X_i}$. Then $\PsAut^*(X) \cdot U =
\Mov{X}^e$.
\end{lemma}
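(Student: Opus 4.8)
The plan is to argue by a connectedness property of the chamber decomposition of $\Mov{X}^e$ into the nef cones of SQMs, combined with the hypothesis to ``push'' any offending chamber into a translate of $U$. Write $S = \PsAut^*(X) \cdot U$. Since $\PsAut^*(X)$ permutes the SQMs of $X$ and carries nef cones to nef cones, $S = \bigcup_{\psi \in \PsAut^*(X)} \bigcup_{i \in I} \psi_*(\Nef{X_i})$ is again a union of chambers $\Nef{X'}^e$. By the standing assumption $\Mov{X}^e = \bigcup \Nef{X'}^e$, it therefore suffices to prove that $S$ contains every chamber $\Nef{X_\alpha}$ arising from an SQM $X_\alpha$ of $X$.

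First I would record the structural facts about the tiling. The chambers $\Nef{X'}^e$ have pairwise disjoint interiors and cover $\Mov{X}^e$; two distinct chambers meet along a codimension-one face precisely when the corresponding SQMs are joined by a flop, and such a wall is shared by exactly the two chambers on its two sides. The key combinatorial input is that this tiling is \emph{connected through its codimension-one walls}: given interior points of any two chambers, a general segment between them remains in the interior of $\Mov{X}^e$ and meets the wall arrangement only along its codimension-one strata, so reading off the chambers it traverses yields a chain of chambers in which consecutive members share a codimension-one face. Equivalently, the adjacency graph whose vertices are chambers and whose edges are flopping walls is connected.

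Granting this, the main argument is short. Suppose for contradiction that $S \neq \Mov{X}^e$, so some chamber $\Nef{X_\alpha}$ is not contained in $S$. Since $S$ is a nonempty union of chambers and the adjacency graph is connected, there is a chamber $\Nef{X_\alpha} \not\subset S$ sharing a codimension-one face with a chamber lying in $S$. That neighbouring chamber has the form $\psi_*(\Nef{X_i})$ for some $\psi \in \PsAut^*(X)$ and $i \in I$, because $\Nef{X_i} \subset U$. Applying $\psi^{-1}$, the SQM $X_{\alpha'}$ with $\Nef{X_{\alpha'}} = \psi^{-1}_*(\Nef{X_\alpha})$ shares a codimension-one face with the cone $\Nef{X_i}$ from our chosen collection. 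The hypothesis of the lemma now supplies an SQM pseudo-automorphism $\varphi$ with $\varphi_*(\Nef{X_{\alpha'}}) \subset U$. Pushing forward by $\psi$ and $\varphi^{-1}$ gives $\Nef{X_\alpha} = \psi_*(\Nef{X_{\alpha'}}) \subset \psi_* \varphi^{-1}_*(U)$, which is a $\PsAut^*(X)$-translate of $U$ and hence contained in $S$. This contradicts $\Nef{X_\alpha} \not\subset S$, so $S = \Mov{X}^e$, as desired.

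The step I expect to be the main obstacle is the connectedness input in the second paragraph: one must justify that a generic path between two chamber interiors can be taken to cross only codimension-one walls, so that no crossing ``jumps'' over a locus of codimension at least two where adjacency is ambiguous, and that each such crossing is an honest flop producing an SQM that shares a codimension-one face with its neighbour (so that the hypothesis actually applies). This is where the geometry of the movable cone and of flopping contractions enters; once it is in place, the permutation action of $\PsAut^*(X)$ on the chambers makes the remaining bookkeeping routine.
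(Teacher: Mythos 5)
Your propagation argument (minimal counterexample via connectedness of the adjacency graph, pull back by $\psi^{-1}$ to land next to an actual $\Nef{X_i}$, apply the hypothesis, push forward again) is correct and is essentially the same mechanism as the paper's, which runs the identical one-step push as an induction on graph distance from $U$. The bookkeeping with $\psi_*\varphi^{-1}_*(U)$ being another $\PsAut^*(X)$-translate of $U$ is exactly right.

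The genuine gap is the one you flagged: the connectedness of the chamber graph. Your proposed route --- a generic segment between interior points of two chambers crossing only codimension-one walls --- needs the chamber decomposition to be locally finite along that segment, i.e.\ that a compact segment in the interior of $\Mov{X}^e$ meets only finitely many chambers $\Nef{X'}^e$. In the generality of the lemma this is not automatic (there are typically infinitely many SQMs and their nef cones can accumulate), and nothing in the stated hypotheses rules out accumulation along your segment, so the ``chain of chambers it traverses'' need not be finite and the argument as written does not close. The paper avoids this entirely by extracting connectedness from the minimal model program rather than from the geometry of the cone: the proof of its Lemma~\ref{logflip} shows that every SQM $X_\alpha \dashrightarrow X$ is realized as a \emph{finite} sequence of log flips starting from $X$ (a $(K_X+\Theta+\epsilon D)$-MMP terminates by BCHM and involves no divisorial contractions because $D$ is movable), and consecutive models in such a sequence have nef cones sharing a codimension-one wall. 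Hence every vertex of the graph is at finite distance from $\Nef{X}$, which is the connectedness you need. If you replace your path argument with this MMP input, the rest of your proof goes through verbatim.
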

\begin{proof} Form a graph with a vertex for the nef cone of each SQM
  and an edge connecting $\Nef{X_\alpha}$ and $\Nef{X_\beta}$ if there
  is a log flip $X_\alpha \dashrightarrow X_\beta$. By Lemma
  \ref{logflip}, each SQM $X_\alpha \dashrightarrow X$ consists of a
  finite sequence of log flips. Therefore, any two vertices of the
  graph are at a finite distance in the graph metric.  \smallskip

We will prove the lemma by induction on the distance $d$ of a given
nef cone $\Nef{X_\alpha}$ from $U$. For $d=1$, the assumption of the
lemma gives $\Nef{X_\alpha} \subset \PsAut^*(X) \cdot U$. Now suppose
that all nef cones at a distance no more than $d$ are contained in
$\PsAut(X) \cdot U$, and suppose $\Nef{X_\alpha}$ is a cone at
distance $d+1$. Then there is a neighboring nef cone $\Nef{X_\beta}$
at distance $d$ from $U$. By the induction hypothesis, there is a
pseudo-automorphism $\varphi$ such that $\varphi_*(\Nef{X_\beta})
\subset U$. Now pseudo-automorphisms act on the graph of nef cones via
automorphisms, so $\varphi_*(\Nef{X_\alpha})$ must either be contained
in $U$ or be a neighbor of a nef cone in $U$. In the second case, by
assumption there is another pseudo-automorphism $\psi$ such that
$\psi_* \varphi_*(\Nef{X_\alpha}) \subset U$, so in either case we
have $\Nef{X_\alpha} \subset \PsAut^*(X) \cdot U$. By induction,
$\PsAut(X) \cdot U$ contains every nef cone $\Nef{X_\alpha}$,
therefore equals $\Mov{X}^e$. \end{proof}

The next lemma describes the relative effective movable cone of the
elliptic fibration $f: X \rightarrow \PP^{n-1}$. We recall that a
Cartier divisor $D$ is called {\em $f$-movable} (respectively, {\em
  $f$-effective}) if codim(Supp(Coker$(f^* f_* \OO_X(D) \rightarrow
\OO_X(D)))) \geq 2$ (respectively, $f_* \OO_X(D) \not= 0$).

\begin{proposition} \label{prop-movablecone}
The relative effective movable cone of $f: X \arrow \PP^{n-1}$ is 
\begin{align*}
\Mov{X/\PP^{n-1}}^e = B:= \left\{ x \in N^1(X/\PP^{n-1}) : x \cdot F >0 \right\}
\cup \{0\},
\end{align*}
where $F$ denotes the class of a fiber of $f$.
\end{proposition}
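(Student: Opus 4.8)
The plan is to prove equality of the two cones by proving two inclusions, exploiting the fact that the relative setting over $\PP^{n-1}$ is simpler than the absolute one. First I would unpack the notation: $N^1(X/\PP^{n-1})$ is the quotient of $N^1(X)$ by the span of $f^*N^1(\PP^{n-1})$, which is one-dimensional and generated by (a multiple of) $-K_X = (n-1)(H - \sum E_i)$, i.e.\ by the class whose restriction to a fiber is trivial. So $N^1(X/\PP^{n-1})$ is $N^1(X)$ modulo the fiber-pullback class, and the pairing $x \cdot F$ descends to this quotient because $F \cdot f^*(\text{anything}) = 0$. Thus the set $B$ is well-defined. The easier inclusion is $\Mov{X/\PP^{n-1}}^e \subseteq B$: any nonzero relatively movable and relatively effective class $x$ must satisfy $x \cdot F \geq 0$, since $F$ is a movable curve (it sweeps out $X$) and hence lies in the dual of the relative pseudoeffective cone; I would argue that $x \cdot F = 0$ forces $x$ to be $f$-vertical, and by the reducible-fibers-in-codimension-two result (Proposition~\ref{prop-redfibers}) together with the structure already used in the Corollary to Lemma~\ref{logflip}, the only $f$-vertical effective classes are multiples of $-K_X$, which die in $N^1(X/\PP^{n-1})$. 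Hence a nonzero relatively effective class in the quotient has $x \cdot F > 0$, giving $\Mov{X/\PP^{n-1}}^e \subseteq B$.

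The harder inclusion is $B \subseteq \Mov{X/\PP^{n-1}}^e$, i.e.\ every class $x$ with $x \cdot F > 0$ is relatively effective and relatively movable. I would approach this exactly as in the proof of Lemma~\ref{logflip}, but in the relative category. Given such an $x$, lift it to an $\RR$-divisor $D$ on $X$; the condition $D \cdot F > 0$ says precisely that $D$ is $f$-big. By the relative version of \cite[Lemma 3.23]{KollarMori1998}, $-mK_X + D$ is $f$-big for large $m$, so after rescaling $-K_X + \epsilon D$ is $f$-big for small $\epsilon > 0$; since $-K_X$ is already $f$-semiample (it defines $f$ itself), one gets that $D$ itself is $f$-big up to adjusting by the fiber class, which is what we need modulo $f^* N^1(\PP^{n-1})$. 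Then, choosing $\Delta = \frac{1}{r}\Delta_0$ with $\Delta_0 \in |-rK_X|$ smooth so that $(X,\Delta)$ is klt and running the relative MMP for $(X, \Theta + \epsilon D)$ via Birkar--Cascini--Hacon--McKernan \cite{BCHM}, I obtain a relative minimal model on which the transform of $D$ is $f$-nef; because $D$ has positive intersection with $F$ there is no divisorial contraction over $\PP^{n-1}$, so the birational map is an SQM over $\PP^{n-1}$ and $x$ lies in the relative nef cone of an SQM, hence in $\Mov{X/\PP^{n-1}}^e$.

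The main obstacle I anticipate is the careful bookkeeping at the boundary $x \cdot F = 0$ in the quotient space, and more substantively the verification that relative bigness is genuinely available: one must confirm that $D \cdot F > 0$ implies $f$-bigness, which requires that the generic fiber be one-dimensional (so that $D \cdot F > 0$ means positive degree on the generic fiber, hence $f$-big by Riemann--Roch on the generic fiber). This is guaranteed by the equidimensionality established in Section~\ref{section-grassmannian}. A secondary subtlety is ensuring the relative MMP terminates with an SQM rather than a Mori fiber space or a divisorial contraction; the condition $x \cdot F > 0$ rules out the fiber-type and divisorial outcomes over the base, so the only remaining operations are flips, and the output is a relative minimal model that is an isomorphism in codimension one, i.e.\ an SQM. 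Once both inclusions are in place, the description $\Mov{X/\PP^{n-1}}^e = B$ follows.
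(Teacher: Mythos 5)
Your forward inclusion is the same as the paper's: effectivity forces nonnegative degree on the generic fiber, degree zero forces verticality, and the only vertical divisors are multiples of $-K_X$, which vanish in $N^1(X/\PP^{n-1})$. For the reverse inclusion, however, you take a genuinely different route. The paper argues elementarily: given $D$ with $D\cdot F>0$, a positive multiple of $D$ restricts to a base-point-free divisor on every irreducible fiber (these are curves, by equidimensionality), and Grauert's semicontinuity theorem then shows that $f_*\OO_X(D)\neq 0$ and that the cokernel of $f^*f_*\OO_X(D)\to\OO_X(D)$ is supported away from all irreducible fibers, hence in codimension $\geq 2$ by Proposition~\ref{prop-redfibers}; finally $B$ is spanned by its integral points. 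You instead run the relative MMP via BCHM, mirroring Lemma~\ref{logflip}. This works, but two of your justifications need repair. First, ``$x\cdot F>0$ rules out the divisorial outcome'' is not the right reason (in Lemma~\ref{logflip} divisorial contractions are excluded because $D$ is \emph{assumed} movable, which is exactly what you are trying to prove here); the correct argument is that a divisor contracted over $\PP^{n-1}$ must be covered by curves in fibers, which is impossible for a horizontal divisor (generically finite over the base) and impossible for a vertical one (relatively numerically trivial, being a multiple of $-K_X$). Second, your final step ``relatively nef on an SQM, hence relatively movable'' is incomplete: relative nefness alone gives neither relative effectivity nor relative movability. You need the relative base-point-free theorem ($aD'-K_{X'}$ is relatively nef and relatively big since $K_{X'}$ is relatively trivial and $D'\cdot F>0$) to conclude that $D'$ is relatively semiample, hence relatively effective and relatively movable, and then that these properties transfer across an isomorphism in codimension one. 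With those two points filled in your argument is sound; what the paper's approach buys is that it is self-contained and avoids BCHM entirely, while yours generalizes more readily to settings where fibers have higher dimension and the fiberwise base-point-freeness argument is unavailable.
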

\begin{proof} The inclusion $\Mov{X/\PP^{n-1}}^e \subseteq B$ is easy
  to see. An $f$-effective divisor $D$ has nonnegative degree on the
  generic fiber of $f$, and has degree 0 only if it is vertical. Since
  the only vertical divisors are multiples of $-\frac{1}{n-1}K_X$, a
  vertical divisor is zero in $N^1(X/\PP^{n-1})$.

To see the reverse inclusion, we use a standard argument involving
Grauert's theorem on semicontinuity of cohomology \cite[Corollary
12.9]{Hartshorne1978}. Suppose $D$ is a divisor on $X$ such that $D
\cdot F >0$. Then the restriction $D_{|X_y}$ to any (fixed)
irreducible fiber $X_y$ is ample. Replacing $D$ by a positive multiple
if necessary, we have that $D_{|X_y}$ is base-point-free, in
particular, effective. Grauert's theorem implies that a section of
$D_{|X_y}$ is the restriction of a section $s \in O_X(D)\left(
  f^{-1}(U) \right)$ for some open set $U \subset \PP^{n-1}$. Hence,
$f_*(O_X(D)) \neq 0$; that is, $D$ is $f$-effective. To see that $D$
is $f$-movable, replacing $D$ by a positive multiple, we can assume
that the restriction $D_{|X_y}$ to every irreducible fiber $X_y$ is
base-point-free. Again by Grauert's theorem, a section of $D_{|X_y}$
is the restriction of a section $s \in O_X(D)\left( f^{-1}(U) \right)$
for some open set $U \subset \PP^{n-1}$. Since $D_{|X_y}$ is
base-point-free, this shows that the support of the sheaf
$\operatorname{Coker } \left( f^*f_* O_X(D) \arrow O_X(D) \right)$
does not contain any point of any irreducible fiber of $f$. We saw in
Section \ref{section-grassmannian} that $f$ has reducible fibers in
codimension 2, so $D$ is $f$-movable. We have shown that every integral
point of the cone $B$ belongs to $\Mov{X/\P}^e$. Since $B$ is spanned
by its integral points, this completes the proof of the reverse
inclusion. \end{proof}

Now we will study the action of pseudo-automorphisms on the movable
cone in more detail. The action on $N^1(X)$ is somewhat complicated,
but fortunately we do not need to understand it in detail. It will
suffice to understand the action on the space $N^1(X/\PP^{n-1})$. The
generic fiber $X_{\eta}$ of the fibration $f: X \rightarrow \PP^{n-1}$
is an elliptic curve. Given two rational points $p, q \in X_{\eta}$,
their difference $p-q \in \Pic^0(X_{\eta})$ defines a translation on
$X_{\eta}$, which extends to a pseudo-automorphism of $X$. The group
of translations is called the {\em Mordell-Weil group} and is denoted
by MW$(f)$. The hyperelliptic involution on $X_{\eta}$ also defines a
pseudo-automorphism of $X$. In fact, the group of relative
pseudo-automorphisms $\PsAut(X/\PP^{n-1})$ is the $\ZZ_2$-extension
of MW$(f)$ generated by the hyperelliptic involution. We will now
describe a fundamental domain for the action of $\PsAut(X/\PP^{n-1})$
on $N^1(X/\PP^{n-1})$.  \smallskip

 Observe that  
\begin{align*}
N^1(X/\PP^{n-1}) &\iso N^1(X) / \left< -K_X \right> \\
& = \left(\Pic(X) / \operatorname{ker } \rho   \right) \otimes \RR \\
&= \Pic(X_\eta) \otimes \RR,
\end{align*}
where $\rho$ denotes the natural restriction homomorphism $\Pic(X)
\arrow \Pic(X_\eta)$). An element
$x \in \mbox{MW}(f)$ acts on $N^1(X/\PP^{n-1})$ by
\begin{align*}
\varphi_x: y &\mapsto y + (y \cdot F) x.
\end{align*}
The involution $\iota$ acts by 
\begin{align*}
\iota: y \mapsto 2(y \cdot F)E_1 -y.
\end{align*}
Using this information, we obtain the following.

\begin{proposition} \label{prop-funddomain} Let $X$ be one of our
  examples. Then $N^1(X/\PP^{n-1})$ has a basis consisting of sections
  of the elliptic fibration $f: X \arrow \PP^{n-1}$. If $S_1,\ldots,
  S_k$ is a basis of rational sections $($where $k=\dim
  N^1(X/\PP^{n-1})$$)$ then a fundamental domain for the action of
  $\PsAut(X/\PP^{n-1})$ on $\Mov{X/\PP^{n-1}}^e$ is the following:
\begin{align*}
V= \RR_+ \cdot \left\{  S_1 + \sum_i a_i(S_i-S_1) : 0 \leq a_i \leq 1,
  \sum_i a_i \leq \frac{k-1}{2} \right\}.
\end{align*}
\end{proposition}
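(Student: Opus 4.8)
The plan is to prove this by combining two ingredients: first, that the relative movable cone $\Mov{X/\PP^{n-1}}^e$ is the half-space $\{x \cdot F > 0\} \cup \{0\}$ (this is Proposition \ref{prop-movablecone}), and second, an explicit description of how $\PsAut(X/\PP^{n-1})$ acts on $N^1(X/\PP^{n-1})$ via the formulas already recorded before the statement. The key conceptual point is that $N^1(X/\PP^{n-1}) \cong \Pic(X_\eta) \otimes \RR$, and under this identification the Mordell--Weil translations and the hyperelliptic involution act by the linear maps $\varphi_x$ and $\iota$ displayed above. So the problem becomes entirely linear-algebraic once the geometry is packaged correctly.

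First I would verify that $N^1(X/\PP^{n-1})$ has a basis of rational sections. Each exceptional divisor $E_i$ restricts to a rational point on the generic fiber $X_\eta$, and the classes of the $E_i$ (modulo the relation coming from $-K_X$) span $\Pic(X_\eta) \otimes \RR$; choosing an appropriate subset gives a basis $S_1, \ldots, S_k$ of sections. Normalizing so that $S_i \cdot F = 1$ for each $i$, I would introduce affine coordinates on the hyperplane $\{x \cdot F = 1\}$ inside $N^1(X/\PP^{n-1})$: write a general such class as $S_1 + \sum_i a_i(S_i - S_1)$. Since the cone $\Mov{X/\PP^{n-1}}^e$ is the $\RR_+$-span of this affine hyperplane (together with the origin), it suffices to produce a fundamental domain for the induced action on the hyperplane, then cone off. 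The translation $\varphi_x$ with $x = S_j - S_1$ acts on the slice $\{x \cdot F = 1\}$ by the affine translation $a \mapsto a + (\text{unit vector in direction } j)$ on the coordinates $(a_i)$; the collection of all such translations generates a full-rank lattice of translations on the affine space spanned by the $S_i - S_1$. The involution $\iota$, written in these coordinates, becomes an affine reflection.

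The main computation is to check that the stated region
\[
V_0 = \left\{ S_1 + \sum_i a_i(S_i - S_1) : 0 \le a_i \le 1,\ \sum_i a_i \le \tfrac{k-1}{2} \right\}
\]
is a genuine fundamental domain for the group generated by these translations and the single reflection $\iota$. The translation lattice alone has the unit cube $\{0 \le a_i \le 1\}$ as a fundamental domain (a standard fact about $\ZZ^{k-1}$ acting by integer translations). The extra involution $\iota$, when conjugated into these coordinates, exchanges the two halves of the cube cut by the hyperplane $\sum_i a_i = \frac{k-1}{2}$ (the ``center'' of the cube, since the reflection $\iota$ fixes the $2$-torsion point and the average coordinate value is $\frac12$). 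So imposing $\sum_i a_i \le \frac{k-1}{2}$ selects exactly one representative from each $\iota$-orbit within the cube, halving the cube along its center. Thus $V_0$ tiles the affine slice under the full group, and $V = \RR_+ \cdot V_0$ is a fundamental domain for the action on the cone.

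\textbf{The hard part} will be the bookkeeping that matches the abstract group action $\varphi_x, \iota$ to the concrete affine transformations on the coordinates $(a_i)$, and in particular confirming that $\iota$ reflects precisely across the central slice $\sum_i a_i = \frac{k-1}{2}$ rather than some other hyperplane. I would carry this out by evaluating $\iota$ on a basis: since $\iota(y) = 2(y \cdot F)E_1 - y$ and $E_1$ corresponds to the section $S_1$, one computes $\iota(S_i) = 2S_1 - S_i$ in $\Pic(X_\eta)\otimes\RR$, so in the coordinates $a$ the involution sends $a_i \mapsto -a_i$ about the point $S_1$, i.e. reflection through the origin of the affine chart; combined with the translation lattice this conjugates to the central reflection of the unit cube, yielding the constraint $\sum a_i \le \frac{k-1}{2}$. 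The disjointness of interiors of translates, and the fact that the union is everything, then follow from the corresponding properties of the cube tiling together with the reflection splitting each cube into two congruent halves of equal volume.
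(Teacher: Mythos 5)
Your reduction to the affine slice $\{x\cdot F=1\}$, the cube as a fundamental domain for the Mordell--Weil translations, and the halving of the cube by the (conjugated) involution all match the paper's argument, and that part of your proposal is sound: composing $\iota$ with translation by $\sigma=\sum_i(S_i-S_1)$ gives $a_i\mapsto 1-a_i$, which swaps the two halves of the cube cut by $\sum_i a_i=\tfrac{k-1}{2}$, exactly as you describe (it is a point inversion through the cube's center rather than a hyperplane reflection, but the argument is unaffected).

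The genuine gap is in your first step, where you assert that ``the classes of the $E_i$ (modulo the relation coming from $-K_X$) span $\Pic(X_\eta)\otimes\RR$.'' This is false in three of the seven cases. One has $\dim N^1(X/\PP^{n-1})=\dim N^1(X)-1=\rho(Z)+r-1$, while there are only $r$ exceptional divisors; so whenever $\rho(Z)\geq 2$ --- namely $Z=\PP^2\times\PP^2$, $Z=F(1,2;3)$ ($k=7$, $r=6$) and $Z=\PP^1\times\PP^1\times\PP^1$ ($k=8$, $r=6$) --- the $E_i$ cannot span, and the first assertion of the proposition (that $N^1(X/\PP^{n-1})$ has a basis of sections) is not established by your argument. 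This is not a cosmetic point: the entire fundamental-domain computation is carried out in coordinates adapted to a basis of sections, so without producing the missing $\rho(Z)-1$ sections the rest of the proof has nothing to stand on. The paper fills this in by exhibiting explicit additional rational sections in each case and checking their classes are independent of the $E_i$: for $\PP^2\times\PP^2$ the preimage of the line through $\pi_1(p_1)$ and $\pi_1(p_2)$, with class $H_1-E_1-E_2$; for $F(1,2;3)$ the divisor of pointed lines through the intersection point of the pointed lines corresponding to $p_1$ and $p_2$, again with class $H_1-E_1-E_2$; and for $\PP^1\times\PP^1\times\PP^1$ the two fibers $\pi_1^{-1}(\pi_1(p_1))$ and $\pi_1^{-1}(\pi_1(p_2))$, with classes $H_1-E_1$ and $H_1-E_2$. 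You would need to supply such sections (and verify they are honest rational sections of $f$, i.e.\ meet the generic fiber in one point) before the linear-algebraic part of your argument can begin.
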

\begin{proof} First, by Theorem \ref{nef}, we see that the kernel of
  $N^1(X) \arrow N^1(X/\PP^{n-1})$ is spanned by $-\frac{1}{n-1}K_X$
  in each example. So $\dim N^1(X/\PP^{n-1}) = \dim N^1(X) -1$. Now
  $N^1(X)$ has a basis consisting of the exceptional divisors
  $\{E_i\}$ together with a basis of $N^1(Z)$, where $Z$ is the
  underlying Fano manifold. The exceptional divisors are always
  sections of the elliptic fibration, and their classes in
  $N^1(X/\PP^{n-1})$ remain linearly independent. So to complete the
  proof of the first claim, in each case, we must identify $\rho(Z)-1$
  additional rational sections of the fibration whose classes in
  $N^1(X/\PP^{n-1})$ are linearly independent of the classes $E_i$. If
  $\rho(Z)=1$, there is nothing to do. In the other cases, the
  rational sections we need are as follows:
\begin{enumerate}
\item $Z= \PP^2 \times \PP^2$: here an additional rational section is
  given by the preimage of the line in (say) the first copy of $\PP^2$
  through the points $\pi_1(p_1)$ and $\pi_1(p_2)$. This divisor has
  class $H_1 - E_1 -E_2$.
\item $Z = F(1,2;3)$: here an additional rational section is given by the
  divisor of all pointed lines which pass through the intersection
  point of the pointed lines corresponding to $p_1$ and $p_2$. This
  divisor has class $H_1-E_1-E_2$.
\item $Z=\PP^1 \times \PP^1 \times \PP^1$: here the two additional
  rational sections can be chosen to be the pullbacks of the
  projections $\pi_1(p_1)$ and $\pi_1(p_2)$. These have classes
  $H_1-E_1$ and $H_1-E_2$.
\end{enumerate}
This completes the proof of the first statement. To prove the second
statement, note first that by Proposition \ref{prop-movablecone} the cone
$\Mov{X/\PP^{n-1}}^e$ is the span of the affine hyperplane
\begin{align*}
W = \left\{ x \in N^1(X/\PP^{n-1}) \, | \, x \cdot F =1 \right\}.
\end{align*}
Furthermore, the action of $\PsAut(X/\PP^{n-1})$ preserves $W$. So it
suffices to prove that the region
\begin{align*}
V_1 := \left\{  S_1 + \sum_i a_i(S_i-S_1) : 0 \leq a_i \leq 1,
  \sum_i a_i \leq \frac{k-1}{2} \right\}
\end{align*}
is a fundamental domain for the action of $\PsAut(X/\PP^{n-1})$ on the
hyperplane $W$.
\smallskip

For definiteness, let us fix $S_1$ as the base-point of our group of
translations.  Then we have a basis of MW$(f)$ given by the
differences $\{S_i-S_1\}$. By the formula for the action of MW$(f)$,
these differences acts on $W$ by translations. We can write an
arbitrary element of $W$ in the form
\begin{align*}
S_1 + \sum_{i=2}^k a_i (S_i - S_1)
\end{align*}
for some real numbers $a_i$, and then applying appropriate elements of
MW$(f)$ we can transform this to a class with $0 \leq a_i \leq 1$ for
all $i$. Moreover, it is clear that two classes with $0 < a_i <1$ for
all $i$ are not in the same orbit of MW$(f)$. So the ``cubical''
region 
\begin{align*}
C= \left\{ S_1 + \sum_{i=2}^k a_i (S_i-S_1) \, | \, 0 \leq a_i \leq 1 \right\}
\end{align*}
is a fundamental domain for the action of MW$(f)$ on $W$. Finally, to
obtain a fundamental domain for the whole group $\PsAut(X/\PP^{n-1})$,
we must consider the action of the involution $\iota$. The formula for
the action of $\iota$ shows that $\iota$ acts on $W$ as follows:
\begin{align*}
S_1 + \sum_{i=2}^k a_i (S_i -S_1) & \mapsto (1+\sum_{i=2}^k a_i)S_1 -
\sum_{i=2}^k a_i S_i.
\end{align*}
If we then apply translation by the element $\sigma = \sum_i (S_i - S_1) \in
MW(f)$, we obtain $S_1 + \sum_i (1-a_i) S_i$. Therefore,
the composition $t_\sigma \circ \iota$ maps the region $C$ to itself
via the formula
\begin{align*}
  t_\sigma \circ \iota : S_1 + \sum_{i=2}^k a_i (S_i -S_1) & \mapsto
  S_1 + \sum_{i=2}^k(1- a_i) (S_i -S_1).
\end{align*}
Since all the coefficients of points in $C$ satisfy $0 \leq a_i \leq
1$, at least one of the quantities $\sum_{i=2}^k a_i$ and
$\sum_{i=2}^k(1- a_i)$ does not exceed $\frac{k-1}{2}$. This shows
that $V_1$ is indeed a fundamental domain of the action on the
hyperplane $W$. \end{proof}

\begin{proposition} \label{prop-nefcones} For each $X$ in our list and
  each SQM $X'$ of $X$ obtained by a sequence of flops of fiber
  components, the nef cone $\Nef{X'}$ is rational
  polyhedral. Moreover, it is spanned by semi-ample divisors; in particular,
  $\Nef{X'}^e=\Nef{X}$.
\end{proposition}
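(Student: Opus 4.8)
The plan is to mirror the argument of Theorem \ref{nef}, replacing the curve classes used there by their transforms under the relevant flops. Since a flop is an isomorphism in codimension one which commutes with the elliptic fibration, we retain the identification $N^1(X') = N^1(X)$, the fibration $f': X' \arrow \PP^{n-1}$, and the semi-ample class $-\frac{1}{n-1}K_{X'} = H - \sum_i E_i = (n-1)^{-1}\cdot(n-1)f'^*\OO_{\PP^{n-1}}(1)$. In particular $-K_{X'}$ is nef but satisfies $(-K_{X'})^n = 0$, so $X'$ is not weak log Fano and the Cone Theorem does not apply directly; the nef cone must be pinned down by an explicit dual computation.

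The first step is to assemble a finite cone of curves $C \subseteq \Curv{X'}$ whose dual bounds the nef cone. The classes of fiber components on $X'$ produced by the flops were already recorded in Section \ref{section-flops} (for instance $F + (l - e_1)$, $F + (2l - e_1 - e_i)$ and $F + (3l - e_1 - e_i - e_j)$ for $Gr(2,5)$), and were computed precisely using Lemma \ref{lemma-flopclass}. Together with the exceptional line classes $e_i$, the transforms of the unflopped line (and conic, or multisection) classes, and the horizontal line classes already used in Theorem \ref{nef}, these span a rational polyhedral subcone $C \subseteq \Curv{X'}$. Its dual $\check C \subseteq N^1(X')$ is then rational polyhedral and contains $\Nef{X'}$, and I would extract an explicit finite list of its extremal generators exactly as in Theorem \ref{nef}; each generator again has the shape $-\frac{1}{n-1}K_{X'} + (\text{a sum of } E_i)$ or a pullback from the underlying del Pezzo manifold, modified by the new classes forced by the flopped fiber components.

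The second step is to show that each generator $D$ of $\check C$ is nef, whence $\check C \subseteq \Nef{X'}$ and therefore $\check C = \Nef{X'}$ is rational polyhedral. One checks $D \cdot \Gamma \geq 0$ on every effective curve $\Gamma$ by reducing to the spanning classes of $C$: curves contained in an exceptional divisor are proportional to some $e_i$; fiber components lie in the recorded list (here Proposition \ref{prop-redfibers} is essential, since it guarantees reducible fibers occur only in codimension two and hence that our list is complete); and horizontal curves pair positively with $-K_{X'}$. Semi-ampleness then follows from the Base-Point-Free Theorem \cite[Theorem 3.3]{KollarMori1998}. Indeed, for a nef $D$ not proportional to $K_{X'}$ one has $D \cdot F > 0$ (a nef divisor with $D \cdot F = 0$ is vertical, hence a multiple of $-K_{X'}$). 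Since $(-K_{X'})^{n-1} = (n-1)^{n-1}[F]$ as a curve class, we get $(-K_{X'})^n = 0$ while $(-K_{X'})^{n-1} \cdot D = (n-1)^{n-1}(D \cdot F) > 0$, so for small rational $a > 0$ the nef divisor $aD - K_{X'}$ has positive top self-intersection and is big. Thus $D$ is semi-ample, hence effective, giving $\Nef{X'}^e = \Nef{X'}$.

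The main obstacle is the completeness claim underlying the second step: one must be sure that the recorded fiber-component classes exhaust the extremal curve classes lying in fibers, since otherwise an unlisted component could violate the nefness of a generator of $\check C$. This is exactly where Proposition \ref{prop-redfibers} and the explicit classical geometry of Section \ref{section-grassmannian} do the work, and verifying it case by case (the Grassmannian, the two quadrics, $\PP^2 \times \PP^2$) is the delicate part, especially since the flopped classes proliferate after repeated flops in the $\PP^2 \times \PP^2$ case. For the threefold cases $Z = F(1,2;3)$ and $Z = \PP^1 \times \PP^1 \times \PP^1$, where Section \ref{section-flops} deliberately avoided computing flop classes, I would instead argue using facts specific to smooth threefolds (finiteness of reducible fibers together with the well-understood local structure of threefold flops), thereby obtaining rational polyhedrality and semi-ampleness without the explicit dual-cone computation.
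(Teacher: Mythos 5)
Your overall strategy --- redoing the explicit dual-cone computation of Theorem \ref{nef} on each flop $X'$ --- is genuinely different from the paper's, and it has a gap at its central step. Writing down a finite subcone $C \subseteq \Curv{X'}$ spanned by exceptional lines, the recorded fiber-component classes, and transforms of horizontal lines only gives the inclusion $\Nef{X'} \subseteq \check C$; to conclude $\Nef{X'} = \check C$ you must show every generator $D$ of $\check C$ is nonnegative on \emph{every} irreducible curve of $X'$, and your reduction of this to the spanning classes of $C$ is not justified. The clause ``horizontal curves pair positively with $-K_{X'}$'' gives no control on $D \cdot \Gamma$ for a horizontal curve $\Gamma$ whose class is not in your list, since $D$ is not a multiple of $-K_{X'}$ plus a manifestly nef class. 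Equally, the mechanism from Theorem \ref{nef} --- a curve pairing negatively with $-\frac{1}{n-1}K_X + \sum_k E_{i_k}$ must lie in some $E_{i_k}$, hence be proportional to $e_{i_k}$ --- breaks on $X'$: the proper transforms of the $E_i$ are no longer projective spaces, and the generators of the dual cone on a flop are classes such as $2H-3E_1-2E_2-2E_3-E_4-E_5$ or $4H-7E_1-3E_2-3E_3-3E_4$, whose nefness requires identifying their base loci explicitly (this is precisely the long case-by-case verification the paper postpones to the proof of Claim \ref{claim-to-prove}). Proposition \ref{prop-redfibers} only controls the \emph{vertical} curves; the completeness problem you flag is hardest for the horizontal ones, and you give no way of ruling out unlisted $K$-negative extremal rays of $\Curv{X'}$.

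The paper sidesteps all of this with an indirect argument. It supposes $\Curv{X'}$ is not rational polyhedral; the Cone Theorem then forces an infinite sequence of extremal rays $R_i$, spanned by curves $C_i$ which may be assumed not to be fiber components, accumulating on a ray $R \subset K^\perp$. Applying Mordell--Weil translations to an ample divisor $A$ produces effective divisors $D = A + \sum_i a_i(S_i - S_1) - \frac{m}{n-1}K_X$ with arbitrary integer coefficients $a_i$ and base locus a union of fiber components; each such $D$ is nonnegative on every $C_i$, hence on $R$, while a suitable choice of the $a_i$ makes $D$ negative on any prescribed ray of $K^\perp$ other than $\RR_+ F$. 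Hence $R = \RR_+ F$, which is impossible because Lemma \ref{lemma-flopclass} places $F$ in the interior of the face $\Curv{X'} \cap K^\perp$. This yields rational polyhedrality without ever identifying the extremal rays of $\Nef{X'}$. Your semi-ampleness paragraph is essentially the paper's argument and is fine, but it presupposes the nefness and finiteness statements that the first part of your proposal does not deliver.
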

\begin{proof}
We will prove the dual statement that the closed cone of curves
  $\Curv{X'}$ is rational polyhedral. 
Let $R$ be an extremal ray of $\Curv{X'}$ which lies in $K^\perp$
  and is different from the ray spanned by $F$. Then there is
  an effective divisor $D'$ on $X'$ whose base locus is a union of
  fiber components and such that $D' \cdot R <0$. To see this, start
  with an ample divisor $A$ on $X$ such that $A \cdot F =1$. Applying
  pseudo-automorphisms  to $A$ produces effective
  divisors of the form
\begin{align}\label{DDDD}
D &= A + \sum_i a_i (S_i -S_1) -\frac{m}{n-1}K_X, 
\end{align}
where the $a_i$ are any chosen integers, $m$ is some positive integer,
and the $S_i$ are a basis of
sections of $f$. Now one can check that since $R$
is not the ray spanned by the class $F$, we have $(S_i-S_1) \cdot R
\neq 0$ for some $i$. Moreover $-K_X \cdot R =0$, and so we can choose
the integers $a_i$ suitably so that 
\begin{align*}
D \cdot R &= A \cdot R + \sum_i a_i (S_i-S_1) \cdot R < 0.
\end{align*}
Note that since $D$ is the proper transform of the ample divisor $A$
under a pseudo-automorphism $X \dashrightarrow X$ over $\PP^{n-1}$, the
base locus of $D$ consists only of fiber components. Finally, let $D'$
be the proper transform of $D$ on $X'$: since $X'$ is obtained from
$X$ by flopping fiber components, the base locus of $D'$ is still a
union of fiber components.

To prove the proposition, now suppose that $\Curv{X'}$ is not rational
polyhedral. By the Cone Theorem, there must be an infinite sequence
$\{C_i\}$ of irreducible curves on $X'$ such that the corresponding
rays $R_i$ are extremal rays of $\Curv{X'}$ and converge to a ray in
$K^\perp$. Since there are only finitely many classes of curves that
lie in the fibers of $f$, we may assume that the curves $C_i$ are not
fiber components.  Then $D \cdot C_i \geq 0$ for all divisors $D$ of
the form given by Equation (\ref{DDDD}), and hence $D \cdot R \geq 0$
for all such divisors $D$. Then, by the previous paragraph, $R$ must
be the ray spanned by the class $F$ of a fiber. By Lemma
\ref{lemma-flopclass}, the class $F$ is always in the interior of the
top-dimensional face $\Curv{X'} \cap K^\perp$, so it is impossible
that $\operatorname{lim}_i R_i = \RR_+ F$. This is a contradiction, so
$\Curv{X'}$ and hence $\Nef{X'}$ are rational polyhedral cones.

Finally, to prove that $\Nef{X'}$ is spanned by semi-ample divisors,
we apply the Base-point-free Theorem \cite[Theorem
3.3]{KollarMori1998}. If $X'$ is obtained from $X$ by a sequence of
flops, then $-K_{X'}$ is base-point-free, in particular nef. We claim that if $D$ is any nef Cartier divisor on $X'$, then $D$ is
semi-ample. If $D$ is a multiple of $-K_{X'}$ there is nothing to
prove, so assume it is not. The class $aD-K_{X'}$ is
nef for any $a \geq 0$, so by \cite[Proposition 2.61]{KollarMori1998}
$aD-K_{X'}$ is big if and only its top self-intersection number is
strictly positive. Any iterated self-intersection of $-K_{X'}$ is an
effective cycle, so by the Nakai--Moishezon criterion, for any $a>0$ we have
\begin{align*}
\left(aD-K \right)^n \geq a D \cdot (-K)^{n-1} = a' D \cdot F
\end{align*}
(where $a'=a \cdot (n-1)^{n-1}$). Since $D$ is nef, if it were the
case that $D \cdot F=0$, then we would have $D \cdot C =0$ for every
fiber component $C$ on $X'$. By Theorem \ref{nef} and Lemma
\ref{lemma-flopclass} fiber components on any flop span a
codimension-one subspace $V \subset N_1$, and the dual space $V^{\perp}
\subset N^1$ is spanned by $-K$. Since by assumption $D$ is not a
multiple of $-K$, we conclude that $D \cdot F>0$. This shows that for
any $a>0$ the class $aD-K_{X'}$ is nef and big, so by the
Base-point-free Theorem $D$ is semi-ample.
\end{proof}

\begin{corollary}
If $X$ is one of our examples and $X'$ is an SQM of $X$, then any
$K$-trivial extremal ray $R$ of $\Curv{X'}$ is isolated, and the flop
of $R$ exists.
\end{corollary}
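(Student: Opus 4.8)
The plan is to combine Proposition~\ref{prop-nefcones} with the flop-existence results of Section~\ref{section-flops}, since the two statements in the corollary are closely linked. First I would establish that any $K$-trivial extremal ray $R$ of $\Curv{X'}$ is isolated. By Proposition~\ref{prop-nefcones}, the cone $\Curv{X'}$ is rational polyhedral for every SQM $X'$ obtained by flopping fiber components, and every SQM of $X$ is of this form by Lemma~\ref{logflip} (each SQM is a finite sequence of flops, which by construction are flops of fiber components lying in $K^\perp$). A rational polyhedral cone has only finitely many extremal rays, so in particular the $K$-trivial extremal rays cannot accumulate: each such ray is isolated among the extremal rays of $\Curv{X'}$. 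This is the easy half.

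The second assertion, existence of the flop, is exactly the content of Lemma~\ref{lemma-flops}, and I would simply verify that its hypotheses are met. That lemma asserts that if $X'$ is an SQM of $X$ obtained by a sequence of flops of classes in $K^\perp$, and $R$ is an isolated extremal ray of $\Curv{X'}$ lying in $K_{X'}^\perp$, then the flop of $R$ exists. We have just argued that every SQM of $X$ arises in this way, and that the given ray $R$ is isolated; by hypothesis $R$ is $K$-trivial, hence lies in $K_{X'}^\perp$. Therefore Lemma~\ref{lemma-flops} applies verbatim and the flop of $R$ exists.

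The one point requiring care, and what I expect to be the main (if modest) obstacle, is confirming that an arbitrary SQM $X'$ of $X$ really is obtained by a sequence of \emph{fiber-component} flops in $K^\perp$, so that both Proposition~\ref{prop-nefcones} and Lemma~\ref{lemma-flops} apply. By Lemma~\ref{logflip} every SQM is a finite chain of log flips, and one must observe that each step in such a chain flops a $K^\perp$-extremal ray spanned by a curve contained in a fiber of $f$: indeed any curve whose class lies in $K^\perp$ must lie in a fiber, since $-K_{X'}$ is the pullback of an ample class from $\PP^{n-1}$ and so is strictly positive on any curve dominating the base. This closes the loop and lets both earlier results be invoked. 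The remainder of the argument is then purely a matter of citing Proposition~\ref{prop-nefcones} and Lemma~\ref{lemma-flops} in sequence.
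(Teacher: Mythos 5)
Your proposal is correct and follows essentially the same route as the paper: Proposition \ref{prop-nefcones} gives that $\Curv{X'}$ is rational polyhedral, hence every extremal ray is isolated, and Lemma \ref{lemma-flops} then yields existence of the flop. The extra care you take in checking that an arbitrary SQM of $X$ is really a chain of fiber-component flops in $K^\perp$ (via Lemma \ref{logflip} and positivity of $-K$ on curves dominating the base) is left implicit in the paper but is the right point to verify.
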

\begin{proof}
  By the previous proposition, $\Curv{X'}$ is rational polyhedral, so
  any extremal ray $R$ is isolated. The flop of $R$ then exists by
  \ref{lemma-flops}.
\end{proof}

\begin{proposition} \label{prop-covering} For each $X$
  in our list, there is a finite collection $\left\{ X_i \right\}_{i
    \in I}$ of SQMs of $X$ such that the cone $V$ is contained in the
  image of  the union 
\begin{align*}
\bigcup_{i\in I} \Nef{X_i}^e
\end{align*}
under the projection map $N^1(X) \arrow N^1(X/\PP^n)$.
\end{proposition}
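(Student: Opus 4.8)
The plan is to pass to the relative setting and reduce the statement to a finiteness assertion about the chamber decomposition of the relative movable cone. Write $\pi: N^1(X) \to N^1(X/\PP^{n-1})$ for the projection, whose kernel is spanned by $-K_X$. By the Corollary to Lemma~\ref{logflip} we have $\Mov{X}^e = \bigcup_{X'} \Nef{X'}^e$, the union ranging over all SQMs $X'$ of $X$. Since $-K_X$ is base-point-free and relatively numerically trivial, adding a large multiple of $-K_X$ carries any class of positive fiber-degree to an absolutely movable class; together with Proposition~\ref{prop-movablecone} this shows that the relative movable cone $\Mov{X/\PP^{n-1}}^e = \{x : x\cdot F>0\}\cup\{0\}$ is covered by the projected nef cones $\Sigma_{X'} := \pi(\Nef{X'})$. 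By Proposition~\ref{prop-nefcones}, for every $X'$ reachable by flops of fiber components $\Nef{X'}$ is rational polyhedral with $\Nef{X'}^e=\Nef{X'}$, and since $-K_X$ lies in $\Nef{X'}$ the image $\Sigma_{X'}$ is a full-dimensional rational polyhedral chamber; these chambers tile the half-space $\Mov{X/\PP^{n-1}}^e$ with disjoint interiors.

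Next I would slice by the affine hyperplane $W = \{x \in N^1(X/\PP^{n-1}) : x\cdot F = 1\}\cong\RR^{k-1}$, where $k=\dim N^1(X/\PP^{n-1})$. On $W$ the group $\PsAut(X/\PP^{n-1})$ acts, with the Mordell--Weil group acting by translations and the involution $\iota$ as an affine reflection, and by Proposition~\ref{prop-funddomain} it has the compact convex fundamental polytope $V_1$ whose cone is $V$. Because the chambers $\Sigma_{X'}$ meet $W$ in polytopes tiling $W$, and $V_1\subset W$ is compact and contained in this tiling, it suffices to prove that only finitely many chambers meet $V_1$: their union will then cover $V_1$, hence its cone covers $V=\RR_+V_1$, and taking $\{X_i\}_{i\in I}$ to be the corresponding SQMs finishes the argument.

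The heart of the matter, and the step I expect to be the main obstacle, is the local finiteness of the tiling near $V_1$; the difficulty is that there are infinitely many SQMs and infinitely many fiber-component classes, since the Mordell--Weil translations and the flops of Lemma~\ref{lemma-flopclass} produce classes of arbitrarily large degree. To control this I would use the explicit description of fiber components. The walls of each chamber $\Sigma_{X'}$ are the hyperplanes $C^\perp$ cut out by classes $C$ of irreducible fiber components, which lie in $K^\perp$ and so define functionals on $N^1(X/\PP^{n-1})$. By the degree bound of Lemma~\ref{d-points} and the case-by-case analysis of Section~\ref{section-grassmannian}, there are only finitely many primitive fiber-component classes $C_0$ (those appearing before any flop), and by Lemma~\ref{lemma-flopclass} every fiber-component class on a flop has the form $C = C_0 + mF$ with $m$ a nonnegative integer. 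For $y\in V_1$ we have $y\cdot F = 1$, so $y\cdot C = y\cdot C_0 + m$; since $y\cdot C_0$ ranges over a bounded interval on the compact set $V_1$ for each of the finitely many $C_0$, the wall $C^\perp$ can meet $V_1$ for only finitely many $m$. Hence only finitely many walls meet $V_1$. As $V_1$ is convex, these finitely many walls partition it into finitely many subregions, each contained in a single chamber, so only finitely many chambers meet $V_1$, as required.

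Finally I would check that the resulting family is genuine. Each chamber meeting $V_1$ is $\pi(\Nef{X_i})$ for an SQM $X_i$ obtained from $X$ by a finite sequence of flops of $K$-trivial extremal rays; these rays are isolated because $\Curv{X'}$ is rational polyhedral (Proposition~\ref{prop-nefcones}), and the flops exist by Lemma~\ref{lemma-flops}. Thus $\{X_i\}_{i\in I}$ is a finite collection of SQMs of $X$, and by construction $V \subseteq \pi\!\left(\bigcup_{i\in I}\Nef{X_i}^e\right)$, which is the assertion of the Proposition.
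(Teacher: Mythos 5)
Your overall architecture is right and matches the paper's in its outer layers: you reduce to covering the slice $V_1$ of the relative movable cone by finitely many relative nef cones of SQMs obtained by flopping fiber components, and your lifting step (a relatively ample $D$ has $D-\tfrac{m}{n-1}K_X$ ample for $m\gg 0$, so $\Nef{X_i}^e$ surjects onto the rational polyhedral cone $\Nef{X_i/\PP^{n-1}}^e$) is exactly the first paragraph of the paper's proof. But the heart of your argument — the local finiteness of the wall arrangement near $V_1$ — rests on the claim that \emph{every} fiber-component class on \emph{every} SQM reachable by flops has the form $C_0+mF$ with $C_0$ in a fixed finite set and $m\in\ZZ$, and this is not established. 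Lemma \ref{lemma-flopclass} only gives the one-step rule $[\Gamma]\mapsto[\Gamma]+k[C]$ (under a transversality hypothesis); iterating it produces classes of the form $C_0+\sum_i k_iC_i$ where the $C_i$ are previously created fiber-component classes, and collapsing such a sum to $C_0'+mF$ requires knowing exactly which components can coexist in a single fiber and in how many points they meet. That is precisely the explicit geometric input of Sections \ref{section-grassmannian} and \ref{section-flops}, and the paper verifies the shape $mF+C$ only for the specific short sequences of flops it actually needs (e.g.\ the remark for $\PP^2\times\PP^2$ that "one can apply Lemma \ref{lemma-flopclass} repeatedly" is a claim about those particular sequences, not about arbitrary SQMs). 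There is also a mild circularity in your last paragraph: you invoke Proposition \ref{prop-nefcones} to know that the walls of a chamber are dual to fiber components and that the bounding rays are isolated, but that proposition applies only to SQMs already known to be obtained by flops of fiber components, which is part of what is being set up inductively.

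The paper sidesteps the need for an abstract local-finiteness statement by turning the inequalities defining $V_1$ directly into a bound on which curve classes a point $x\in V_1$ can be negative on. For instance, for $Gr(2,5)$ the conditions $a_i\le 1$ and $\sum_i a_i\le 2$ show one never needs to flop quartics and that $x\cdot\bigl(F+(dl-l_1-\cdots)\bigr)\ge 0$ for the finitely many classes created by the admissible flops; for the complete intersection of quadrics and for $\PP^2\times\PP^2$ the analogous explicit estimates are made; for the cubic and the double cover no flops are needed; and for the two threefold cases the paper simply invokes Kawamata's relative cone conjecture. So the finiteness is built in from the start by an explicit enumeration rather than deduced from a tiling argument. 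Your strategy could in principle be completed, but only by supplying the missing combinatorial control on fiber-component classes of iterated flops — which would amount to redoing the paper's case-by-case analysis in a different order.
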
 \begin{proof} The first step of the proof is to
  prove that the region $V$ is covered by the union of a finite
  collection of relative effective nef cones $\Nef{X_i/\PP^{n-1}}^e$,
  where the $X_i$ are SQMs of $X$ obtained by flopping classes of
  fibral curves. (Note that these cones are automatically rational
  polyhedral, since there are only finitely many classes of fiber
  components on any SQM, and the relative effective nef cone is the
  same as the relative nef cone by Proposition
  \ref{prop-movablecone}.) We will do this case-by-case below.

  Given this, we complete the proof as follows. If $D$ is a divisor
  which maps into the interior of the cone $\Nef{X_i/\PP^{n-1}}^e$, in
  other words a relatively ample divisor, then by
  \cite[Lemma]{KollarMori1998} the divisor $D-\frac{m}{n-1}K_X$ is
  ample on $X_i$ for $m$ sufficiently large. Since each $X_i$ is
  obtained by flopping fiber components, the class $-\frac{1}{n-1}K_X$
  belongs to $\Nef{X_i}^e$, so this proves that $\Nef{X_i}^e$ surjects
  onto the interior of $\Nef{X_i/\PP^{n-1}}^e$. But now since both these
  cones are rational polyhedral, in fact the image of $\Nef{X_i}^e$ must
  be the whole cone $\Nef{X_i/\PP^{n-1}}^e$.  
\smallskip

So in each case we must identify a finite set of SQMs $X_i$ obtained by flopping
fiber components such that $V$ is covered by the union of the cones
$\Nef{X_i/\PP^n}$.
\begin{enumerate}
\item {\bf $Z=Gr(2,5)$}. Here $k=5$, so the final condition in the
  definition of $V_1$ is $\sum_i a_i \leq 2$. Let $x$ be a point in
  $V_1$: since $a_i \geq 0$ for $i=2,3,4,5$ we have $x \cdot (l-l_i)
  \geq 0$ for these values of $i$. So in order to make the class $x$
  relatively nef we may need to flop classes of the form $l-l_1$,
  $2l-l_1-l_j$ where $\sum_{i \neq j} a_i >1$ and $3l-l_1-l_j-l_k$
  where $\sum_{i \neq j,k} a_i >1$. Note that since $a_i \leq 1$ we
  never need to flop the class of a quartic.

To check that this sequence of flops makes $x$ relatively nef, we need
to check its degree on the classes of the new fiber components
created. As explained in Section \ref{section-flops} these new classes
are of the form $F+(l-l_1)$, $F+(2l-l_1-l_j)$,
$F+(3l-l_1-l_j-l_k)$. But now 
\begin{align*}
x \cdot (F+(l-l_1)) = 2- \sum_i a_i \geq 0
\end{align*}
by the description of $V_1$, and so
\begin{align*}
x \cdot (F+(2l-l_1-l_j)) = 2- \sum_i a_i +a_j \geq 0, \\
x \cdot (F+(3l-l_1-l_j-l_k)) = 2- \sum_i a_i +a_j+a_k \geq 0 \\
\end{align*}
also. Therefore $V_1$ is covered by the relative nef cones of the
finitely many SQMs obtained by flopping some sets of curves with
classes $l-l_1, 2l-l_1-l_j, 3l-l_1-l_j-l_k$. 

\item $Z = Q_1 \cap Q_2 \subset \PP^{n+2}$: Here $k=4$ so we have the
  condition $\sum_i a_i \leq \frac32$. In this case, to make a class $x \in
  V_1$ relatively nef we may have to flop curves of the form $l-l_1$
  and $2l-l_1-l_j$ where  $\sum_{i \neq j} a_i >1$. Since $a_i \leq 1$
  we never need to flop the class of a cubic. 

  Again we must check such a sequence of flops makes $x$ relatively
  nef. In this case the new fiber components have classes $F+(l-l_1)$
  or $F+(2l-l_1-l_j)$, and the same calculation as above shows that
  these numbers are nonnegative.

\item $Z$ is a cubic hypersurface in $\PP^{n+1}$. Here $k=3$ so we have
  the condition $\sum_i a_i \leq 1$: that is, $V_1$ is the unit
  simplex. By Theorem \ref{nef} the projection $\Nef{X}^e
  \arrow V$ is surjective.

\item $Z= \PP^2 \times \PP^2$: Here $k=7$ so we have the condition
  $\sum_i a_i \leq 3$. In this case, the sequence of flops we need to
  perform may be more complicated: we may have to flop two components
  of the same fiber in sequence. In more detail, one checks that
  divisors of the form 
\begin{align*}
x &=S_1 + \sum_{i=2}^7 a_i (S_i -S_1)
\end{align*}
with $0 \leq a_i \leq 1$ and $\sum_i a_i \leq 3$ have intersection
numbers $x \cdot C \geq -4$ with fiber components $C$.  Since $x
\cdot F =1$ by definition of the region $V_1$, one has $x \cdot (4F+C)
\geq 0$ for all $C$ which are components of fibers. By the discussion
in Section \ref{section-flops}, this shows that after an appropriate
sequence of flops the class $x$ becomes relatively nef.

 \item[(5, \,6)] $Z=F(1,2;3)$ or $Z=\PP^1 \times \PP^1 \times \PP^1$. These varieties
  have dimension 3, so Kawamata's proof of the relative version of the
  cone conjecture \cite{Kawamata1997} applied to $f: X \arrow \PP^2$
  shows that in both cases there are finitely many flops $\{X_i\}$ such
  that $V$ is contained in the union of the cones
  $\Nef{X_i/\PP^2}$. For the purposes of the cone conjecture, it is
  not necessary to identify these flops explicitly.

\item[(7)] $Z \arrow \PP^n$ is a double cover branched on a quartic. Here
  $k=2$ so we have the condition $\sum_i a_i \leq \frac12$. That is,
  $V_1$ is the interval $[0,\frac12]$ in the affine line $W$. Again we
  have that $\Nef{X}^e$ surjects onto $V$. (In this case, the
  image of $\Nef{X}^e$ is strictly larger than $V$, due to the extra
  automorphism of $X$ described at the end of Section
  \ref{section-nef}.)

\end{enumerate}
\end{proof}

We can now complete the proof of the cone conjecture.
\begin{theorem}\label{main-corollary}
The second statement of Conjecture \ref{conj-coneconjecture} holds for $X$.
\end{theorem}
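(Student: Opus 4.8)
The plan is to combine the relative analysis of this section with the equivariant projection $p\colon N^1(X)\to N^1(X/\PP^{n-1})=N^1(X)/\langle K_X\rangle$. Because every pseudo-automorphism preserves $K_X$, the class $-K_X$ spanning $\ker p$ is fixed, so $p$ is equivariant: it intertwines the action of $\PsAut(X/\PP^{n-1})$ on $N^1(X)$ with the explicit Mordell--Weil-and-involution action on $N^1(X/\PP^{n-1})$ recorded before Proposition \ref{prop-funddomain}. By Lemma \ref{logflip} and its corollary, $\Mov{X}^e=\bigcup_{X'}\Nef{X'}^e$ runs over the nef chambers of all SQMs, and by Proposition \ref{prop-nefcones} each chamber is rational polyhedral. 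So statement (2) of Conjecture \ref{conj-coneconjecture} reduces to showing that there are finitely many chambers up to $\PsAut^*(X)$ and that they admit a rational polyhedral fundamental domain.

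First I would establish the covering. Proposition \ref{prop-funddomain} furnishes a genuine rational polyhedral fundamental domain $V$ for $\PsAut(X/\PP^{n-1})$ acting on $\Mov{X/\PP^{n-1}}^e$, and Proposition \ref{prop-covering} provides a finite family of SQMs $\{X_i\}_{i\in I}$, each gotten by flopping fiber components, whose relative nef cones cover $V$, with $\Nef{X_i}^e$ surjecting onto $\Nef{X_i/\PP^{n-1}}^e$. Set $U=\bigcup_{i\in I}\Nef{X_i}^e$. I would then verify the hypothesis of Lemma \ref{lemma-conestrick} for this family: each rational polyhedral cone $\Nef{X_i}$ has finitely many codimension-one faces, and across each interior face sits a unique flopped neighbor $X_\alpha$. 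Projecting by $p$, the relative chamber of $X_\alpha$ borders the cover of $V$; since $V$ is a fundamental domain and pseudo-automorphisms permute the relative chamber decomposition compatibly with $p$, some $\varphi\in\PsAut(X/\PP^{n-1})$ carries this chamber to one of the covering chambers $\Nef{X_j/\PP^{n-1}}$, so $\varphi_*\Nef{X_\alpha}^e\subset U$. Lemma \ref{lemma-conestrick} then gives $\PsAut^*(X)\cdot U=\Mov{X}^e$, which already yields the finiteness assertion: every chamber is $\PsAut^*$-equivalent to one of the finitely many $\Nef{X_i}^e$.

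To produce an honest fundamental domain I would take $\Pi':=p^{-1}(V)\cap\Mov{X}^e$. It is a convex cone, being the intersection of the convex cones $p^{-1}(V)$ and $\Mov{X}^e$. Since $V$ meets $\Mov{X/\PP^{n-1}}^e$ only in its interior (its slice satisfies $x\cdot F=1>0$) and is covered by the relative chambers of the $X_i$, only those chambers occur over the interior of $V$, whence $\Pi'=\bigcup_{i\in I}\bigl(p^{-1}(V)\cap\Nef{X_i}^e\bigr)$ is a finite union of rational polyhedral cones; a convex finite union of polyhedral cones has its extremal rays among the finitely many extremal rays of the pieces, so $\Pi'$ is a single rational polyhedral cone. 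That $\PsAut^*(X)\cdot\Pi'=\Mov{X}^e$ follows from the covering above and equivariance of $p$: given $y$, pick $g$ with $g\cdot p(y)\in V$, so $g\cdot y\in p^{-1}(V)\cap\Mov{X}^e=\Pi'$. Interior-disjointness of distinct translates descends from the same property of $V$, using that $\PsAut^*(X/\PP^{n-1})$ acts faithfully on $N^1(X/\PP^{n-1})$ and fixes the direction $\ker p$, so that no identification occurs along the fibers of $p$.

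The hard part, I expect, is the verification feeding Lemma \ref{lemma-conestrick}: controlling the finitely many chambers bordering $U$ and exhibiting, for each, a pseudo-automorphism returning it to $U$. This is precisely where Propositions \ref{prop-funddomain} and \ref{prop-covering} must interlock, and where one needs that $\PsAut(X/\PP^{n-1})$ moves the relative chamber decomposition transitively enough to send any bordering chamber back into the fundamental region $V$. A secondary issue is that $\PsAut^*(X)$ can strictly contain the group $\mathrm{MW}(f)\rtimes\langle\iota\rangle$ of Proposition \ref{prop-funddomain} by the finite automorphism group $\Aut(X)$ (for example the $S_5$ permuting the sections when $Z=Gr(2,5)$, or the covering involution in case $(7)$). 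Since this extra group is finite and acts by rational linear maps preserving $\Mov{X}^e$, I would finish by intersecting $\Pi'$ with a rational polyhedral fundamental domain for it, which preserves rational polyhedrality and completes the proof of statement (2).
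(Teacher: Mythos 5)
Your scaffolding matches the paper's: reduce via Lemma \ref{lemma-conestrick} to chambers bordering the finite family $\{X_i\}$ of Proposition \ref{prop-covering}, use the relative fundamental domain $V$ of Proposition \ref{prop-funddomain} to move such chambers back, and then package a fundamental domain. The construction $\Pi'=p^{-1}(V)\cap\Mov{X}^e$ is a reasonable way to make explicit the final step that the paper only sketches. But there is a genuine gap at the exact point you flag as ``the hard part,'' and you have misdiagnosed what that hard part is. You assert that for a chamber $\Nef{X_\alpha}$ sharing a codimension-one face with some $\Nef{X_i}$, ``projecting by $p$, the relative chamber of $X_\alpha$ borders the cover of $V$,'' and that the remaining difficulty is whether $\PsAut(X/\PP^{n-1})$ acts ``transitively enough'' on relative chambers. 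That is not where the difficulty lies. The projection $p$ kills the $K_X$-direction, so a wall of $\Nef{X_i}$ is visible in $N^1(X/\PP^{n-1})$ only if it is dual to a curve class $C$ with $K_X\cdot C=0$ lying in a fiber of $f$. If an interior wall of $\Nef{X_i}$ were instead dual to a small $K$-negative extremal ray (or a $K$-trivial curve not contained in a fiber), the neighboring chamber $X_\alpha$ would have the \emph{same} image under $p$ as $X_i$, no element of $\PsAut(X/\PP^{n-1})$ could distinguish it from $X_i$, and the hypothesis of Lemma \ref{lemma-conestrick} would fail. The same issue undermines your identification $\Pi'=\bigcup_{i\in I}\bigl(p^{-1}(V)\cap\Nef{X_i}^e\bigr)$: without ruling out such walls, a fiber of $p$ over an interior point of $V$ could pass through absolute chambers other than the $\Nef{X_i}^e$.

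What is needed is precisely the paper's Claim \ref{claim-to-prove}: every codimension-one face of each $\Nef{X_i}$ meeting the interior of $\Mov{X}$ is dual to the class of a fiber component. Proving this is the bulk of the paper's argument and is not formal: for the threefold cases it uses smoothness of flops of smooth threefolds together with Mori's theorem that smooth threefolds admit no small $K$-negative contractions; for $Z=\PP^2\times\PP^2$ it requires showing the flops $X_i$ are smooth and then a deformation-theory and bend-and-break argument to exclude small $K$-negative contractions of the fourfolds; for $Gr(2,5)$ and the intersection of two quadrics it requires explicitly listing the extremal rays of the nef cone of every SQM in the family and verifying, class by class (via decompositions into effective divisors, identification of base loci, and transport by Mordell--Weil elements), that each generator is semi-ample on the appropriate flop. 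None of this appears in your proposal, and the surrounding propositions you cite do not supply it. A secondary, smaller omission: even granting the relative statement $\varphi_*\Nef{X_j/\PP^{n-1}}=\Nef{X_\alpha/\PP^{n-1}}$, upgrading it to $\varphi_*\Nef{X_j}=\Nef{X_\alpha}$ requires knowing that $-K$ is nef on $X_\alpha$, which again comes from Claim \ref{claim-to-prove} (since $X_\alpha$ is then a flop of fiber components). Your closing remark about enlarging the group from $\mathrm{MW}(f)\rtimes\langle\iota\rangle$ to all of $\PsAut^*(X)$ by a finite group is fine and consistent with the paper.
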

\begin{proof} 
  The idea of the proof is to show that given any SQM $X_\alpha$ of
  $X$, there is a pseudo-automorphism $\varphi$ such that $\varphi_*
  \Nef{X_j} = \Nef{X_\alpha}$, where $X_j$ is one of the finitely many
  SQMs $\{X_i\}$ in Proposition \ref{prop-covering}. By Lemma
  \ref{lemma-conestrick}, it suffices to prove this when
  $\Nef{X_\alpha}$ shares a codimension-one face with one of the cones
  $\Nef{X_i}$. We first assume the following claim:
\begin{claim}\label{claim-to-prove}
 Every codimension-one face of a cone $\Nef{X_i}$ which
intersects the interior of the movable cone is dual to the class of a
fiber component.
\end{claim}
The claim implies that if $\Nef{X_\alpha}$ shares a codimension-one
face with $\Nef{X_i}$, then $X_\alpha$ is obtained from $X_i$ by
flopping the class of a fiber component. Since the relative nef cones
$\Nef{X_i/\PP^{n-1}}$ cover the fundamental domain $V$, there exists
some $j$ and some pseudo-automorphism $\varphi$ such that $\varphi_*
\Nef{X_j/\PP^{n-1}} = \Nef{X_\alpha/\PP^{n-1}}$. Choosing an ample
class $A_\alpha$ on $X_\alpha$, the previous Proposition implies there
is an ample class $A_j$ on $X_j$ such that
\begin{align*}
  \varphi_*(A_j) & = A_\alpha + \frac{m}{n-1}K_X
\end{align*}
for some integer $m$. 

If $m \geq 0$, then we can add $-\frac{m}{n-1}K_X$ to both sides of
this equation. Since pseudo-automorphisms preserve $\frac{1}{n-1}K_X$,
this gives
\begin{align*}
  \varphi_*(A_j-\frac{m}{n-1}K_X) & = A_\alpha.
\end{align*}
Moreover, since $\frac{1}{n-1}K_X$ is semi-ample on $X_j$, the class
$A_j-\frac{m}{n-1}K_X$ belongs to $\Nef{X_j}$, and we conclude
that $\varphi_*(\Nef{X_j}) = \Nef{X_\alpha}$.
\smallskip

If $m \leq 0$, then $A_\alpha + \frac{m}{n-1}K_X$ belongs to
$\Nef{X_\alpha}$ provided we know that $-\frac{1}{n-1}K_X$ is nef on
$X_\alpha$. By Claim \ref{claim-to-prove}, $X_\alpha$ is obtained from
some $X_i$ by flopping the class of some fiber component, so
$-\frac{1}{n-1}K_X$ remains nef (indeed base-point-free) on
$X_\alpha$. So again in this case have $\varphi_*(\Nef{X_j}) =
\Nef{X_\alpha}$ as required.

\smallskip

There remains to verify Claim \ref{claim-to-prove}. First, assume that
$\dim X = 3$. By \cite[Theorem 6.15]{KollarMori1998}, the flop of a
smooth threefold is again smooth. Hence, in this case, all $X_i$
occurring in Proposition \ref{prop-covering} are smooth threefolds.
By Mori's theorem \cite{Mori1982}, there are no small $K$-negative
extremal contractions on smooth threefolds, so a codimension-one face
separating two nef cones must be dual to the class of a fiber
component. 

Next, let us verify Claim \ref{claim-to-prove} for the case $Z= \PP^2
\times \PP^2$. We wish to prove that there are no small
$K$-negative extremal contractions of any of the flops $X_i$. First,
we note that each $X_i$ is smooth. To see this, note that any sequence
of flops of $X$ restricts to a sequence of flops of any smooth
threefold $Y \in |H|$ inside $X$. As explained in the previous
paragraph, any sequence of flops applied to $Y$ yields a smooth
threefold $Y_i$. If now the resulting fourfold $X_i$ were singular,
there would be a smooth threefold $Y_i \subset X_i$ which is the
support of a Cartier divisor, and which intersects the singular locus
of $X_i$. This is impossible, so we conclude that $X_i$ must be
smooth.

Now let us show that there are no small $K$-negative extremal
contractions of $X_i$. By the Cone Theorem \cite{KollarMori1998}, any
$K$-negative extremal ray of the cone of curves $\Curv{X_i}$ is
spanned by a rational curve $C$ with $0<-K_X \cdot C \leq \dim
X+1=5$. Since $-K_X=3H$, the only possibility is that $H \cdot
C=1$. By basic deformation theory \cite[I.2.17]{Kol}, at the point
corresponding to the inclusion of the curve $C$, the space of maps
$\PP^1 \arrow X$ modulo isomorphisms of the map has dimension at least
$-K_X \cdot C + \dim{X} - 3 =4$. Now suppose that the deformations of
$C$ swept out only a surface $S$ inside $X_i$. Then the space of
curves through 2 general points of $S$ must have dimension at least
2. By Bend and Break \cite[II.5]{Kol}, there must be a deformation of
$C$ which is reducible. But $C$ is a generator of an extremal ray of
the cone curves $\Curv{X_i}$, and is primitive since $H \cdot C =1$,
so this is impossible. We conclude that deformations of $C$ must cover
a locus of dimension at least 3 in $X_i$, as required.

To verify Claim \ref{claim-to-prove} in the remaining cases, we
need to find all the codimension-one faces of the cones $\Nef{X_i}$
which intersect the interior of the movable cone. As usual, we will do
this case-by-case.

\begin{enumerate}
\item {\bf $Z=Gr(2,5)$.} As explained above, here the SQMs $X_i$ are
  obtained by flopping classes of the following kinds:
\begin{align*}
  l-l_1, \, 2l-l_1-l_i (i=2,\ldots,5), \, 3l-l_1-l_i-l_j
  (i,j=2,\ldots,5, i \neq j).
\end{align*}
Note that there are restrictions on which sets of curves can be
flopped: the class $l-l_1$ must be flopped first, cubics
$3l-l_1-l_i-l_j$ cannot be flopped unless conics through $p_i$ and
$p_j$ have already been flopped, and two cubics whose union passes
through all points cannot both be flopped, because of the condition
$\sum_i a_i \leq 2$ in the definition of $V_1$. The following is a complete
list (up to relabeling) of all possible sequences of flops of classes
of these kinds; for brevity, we use the notation $1 \cdots i$ to
indicate the flop of the class $kl-l_1-\cdots-l_i$.
\begin{align*}
  &(1) \\
  &(1, \, 12), \quad (1,\, 12, \, 13),\quad (1,\, 12, \, 13, \, 14),
  \quad
  (1,\, 12, \, 13, \, 14, \, 15),  \\
  &(1,\, 12, \, 13, \, 123), \quad (1,\, 12, \, 13, 14, \, 123) \quad
  (1,\, 12,
  \, 13, \,14, \, 15, \, 123) \\
  &(1,\, 12, \, 13, \, 14, \, 123, \, 124), (1,\, 12, \, 13, \, 14, \,
  15, 123, \, 124) \\\ &(1,\, 12, \, 13, \, 14, \, 123, \, 124, \,
  134) \quad (1,\, 12, \, 13, \, 14, 15, \, 123, \, 124, \, 134) \\
&(1,\, 12, \, 13, \, 14, \, 15, \, 123, \, 124, \, 125) \quad 
\end{align*}
It is straightforward to calculate the nef cones of all these SQMs and
observe that their codimension-one faces which intersect the interior
on the movable cone are dual to fiber components, as required. Here is
a list of the extremal rays of the nef cone in each case on the above
list:
\begin{itemize}
\item $(1)$: $\left\{ H-E_1 - \sum_i E_i\right\}$, $H-2E_1$.
\item flops involving only line and conics, e.g. $ (1, \, 12)$: here
  the extremal rays are of the form $\left\{ H-E_1 - E_2 - \sum_i
    E_i\right\}$, \,
  $H-2E_1$, \,$H-2E_1-E_2$.
\item flops involving lines, conics, and one cubic, e.g. $(1, \, 12, \,
  13, \, 123)$: here the extremal rays are of the form \newline

  $\left\{ H-E_1 - E_2 - E_3 - \sum_i E_i\right\}$, \,$H-2E_1-E_2$,
  \,$H-2E_1-E_3$, \,$2H-3E_1-2E_2-2E_3-E_i \, (i=4,5)$,
  \,$2H-3E_1-2E_2-2E_3-E_4-E_5$.
\item flops involving lines, conics, and more than one cubic,
  for example,  $(1, \, 12, \, 13, \, 14, \, 123, \, 124)$: here the extremal
  rays are of the form \newline
  $\left\{ H-E_1 - E_2 - E_3 - E_4 - \sum_i E_i\right\}$,
  \,$H-2E_1-E_2$, $H-2E_1-E_3$, \,$H-2E_1-E_4$, \,$2H-4E_1-E_2-E_3-E_4$,
  \,$2H-4E_1-E_2-E_3-E_4-E_5$, \,$2H-3E_1-2E_2-2E_3-E_i \, (i=4,5)$,
  \,$2H-3E_1-2E_2-2E_3-E_4-E_5$, \,$3H-5E_1-3E_2-2E_3-2E_4$,
  \,$3H-5E_1-3E_2-2E_3-2E_4-E_5$, \,$4H-7E_1-3E_2-3E_3-3E_4$,
  \,$4H-7E_1-3E_2-3E_3-3E_4-2E_5$.

\end{itemize}
Let us verify that these classes are indeed nef on the appropriate
SQMs:
\begin{itemize}
\item Classes in $\Nef{X}$: we saw in Section \ref{section-nef} that
  all nef divisors on $X$ are effective. A class of this type only
  appears in a nef cone where we have flopped curves which are
  disjoint from some representative of that class. So it remains nef
  on such a class.
\item The class $H-2E_1$: this class is represented by any
  hyperplane section of $Gr(2,5)$ which contains $T_{p_1} Gr(2,5)$, so
  the base locus of this class is $T_{p_1} \cap Gr(2,5)$, which is the
  locus of lines on the Grassmannian through $p_1$, that is the locus
  of curves with class $l-l_1$. Flopping this locus, this class
  becomes base-point-free, in particular nef, and remains so if we flop
  loci of curves which are disjoint from some representative of the class.

\item Classes of the form $H-2E_1-E_i$: such a class is represented by
  a hyperplane section of $Gr(2,5)$ which contains $T_{p_1} Gr(2,5)$
  and the point $p_i$. The linear space $T_{p_1}$ and the point $p_i$
  together span a subspace $\PP^7 \subset \PP^9$, so there is a
  1-parameter family of such hyperplane sections. The base locus of
  this family certainly contains the locus of curves with classes
  $l-l_1$ and $2l-l_1-l_i$: on the other hand, by Pieri's rule
  \cite[I.5]{gh}, it has class $\sigma_1^2= \sigma_{1,1}+ \sigma_2$,
  so it must be equal to this locus. Flopping these loci, this class
  becomes base-point-free, in particular nef, and again remains so on
  further flops.
\item Classes of the form $2H-3E_1-2E_2-2E_3-E_i$: we can decompose
  such a class into effective classes in different ways, as follows:
\begin{align*}
H-2E_1-E_2-E_4 &+ H-E_1-E_2-2E_3, \\
H-2E_1-E_3-E_4 &+H-E_1-2E_2-E_3.
\end{align*}
(Here each class is effective since there is a unique $\PP^8 \subset
\PP^9$ containing the tangent space $T_{p_1}X$ and two other general
points.) These decompositions show that the base locus of this class
on $X$ is exactly the locus of curves $l-l_1$, $2l-l_1-l_i$,
$3l-l_1-l_i-l_j$. Moreover, these hyperplanes have distinct normal
directions along the curves to be flopped, so after flopping all the
curves in the base locus, the proper transforms of the hyperplanes
become disjoint. Therefore, this class becomes base-point-free after
performing the appropriate sequence of flops.
\item The class $D=2H-3E_1-2E_2-2E_3-E_4-E_5$: First, one can
  see that this class is effective by writing it in the form 
\begin{align*}
-\frac{1}{5}K_X+ (H-2E_1-E_2-E_3).
\end{align*}
To show this class is nef on the appropriate flop is somewhat more
complicated. Applying the element $\varphi=(E_1-E_4) \in$ MW$(f)$ to our
class transforms it to a class of the form $-\frac{m}{5}K_X +
(H-E_1-E_2-E_3-E_4)$ for some integer $m$. By restricting to the
preimage $S$ of a general line in $\PP^5$ and using invariance of
intersection numbers under automorphisms, we find that $m=0$: that is,
$\varphi(D)=D'=H-E_1-E_2-E_3-E_4$. We saw in Section \ref{section-nef}
that the class $D'$ is semi-ample on $X$; replacing $D$ and $D'$ by
appropriate multiples, we can assume it is base-point-free. Since
$\varphi$ and hence $\varphi^{-1}$ are elements of MW$(f) \subset
\PsAut(X/\PP^5)$, it follows that the base locus of $D$ is a union of
fiber components. If we perform any sequence of flops of fiber
components and take the proper transform the same is true: that is, on
any SQM of $X$ obtained by flopping fiber components, the proper
transform of $D$ is nonnegative on any curve that is not a fiber
component. On the other hand, we know that after performing the
appropriate sequence of flops, $D$ becomes relatively nef, i.e. nef on
all fiber components. Hence, $D$ is nef on the appropriate SQM of $X$.
\item Classes of the form $2H-4E_1-E_2-E_3-E_4$ or
  $2H-4E_1-E_2-E_3-E_4-E_5$: the second class decomposes into a sum of
  effective divisors in two different ways as follows:
  $(H-2E_1-E_2-E_3)+(H-2E_1-E_ 4-E_5)$, $(H-2E_1-E_2-E_4)+(H-2E_1-E_
  3-E_5)$. Since these are all distinct irreducible prime divisors,
  the base locus of this class has codimension 2. Moreover, one can see
  that the intersection of these two representatives of the class is
  exactly the locus to be flopped. A similar (easier) argument works
  for the first class.
\item Classes of the form $3H-5E_1-2E_3-2E_4$ or
  $3H-5E_1-2E_3-2E_4-E_5$: as in a previous case, there are
  pseudo-automorphisms of $X$ taking these classes to $H-E_1-E_2$ or
  $H-E_1-E_2-E_5$. The same argument as before allows us to conclude
  that these classes are nef on the appropriate flops. 
\item Classes of the form $4H-7E_1-3E_2-3E_3-3E_4$ or
  $4H-7E_1-3E_2-3E_3-3E_4-2E_5$: again there are pseudo-automorphisms
  taking these classes to $H-E_5$ and $H-2E_1-E_5$ respectively. The
  first class is base-point-free on $X$, and the base locus of the
  second is known. We can then argue as before to conclude that these
  classes are nef on the appropriate flop. 
  
\end{itemize}

\item {\bf $Z$ is the intersection of two quadrics in $\PP^{n+2}$.}
  Here the SQMs $X_i$ are obtained by flopping classes of the
  following kinds:
\begin{align*}
l-l_1, \, 2l-l_1-l_i (i=2,\ldots,4).
\end{align*}
Here it is possible to perform any sequence of flops starting with the
flop of $l-l_1$. The calculation of the nef cones of the resulting
SQMs is formally identical to the previous case, so again we get the
nef cones
\begin{itemize}
\item $(1)$: $\left\{ H-E_1 - \sum_i E_i\right\}$, \,$H-2E_1$.
\item $ (1, \, 12)$: $\left\{ H-E_1 - E_2 - \sum_i E_i\right\}$,
  \,$H-2E_1$, \,$2H-3E_1-2E_2$, \,$2H-3E_1-2E_2-E_i$, \,$2H-3E_1-2E_2-E_3-E_4$.
\item $(1, \, 12, \, 13)$: $\left\{ H-E_1 - E_2 - E_3 - \sum_i
    E_i\right\}$, \,$H-2E_1$, \,$2H-3E_1-2E_2-E_3$,
  \,$2H-3E_1-2E_2-E_3-E_4$, \,$2H-3E_1-2E_3$, \,$3H-5E_1-2E_2-2E_3$,
  \,$3H-5E_1-2E_2-2E_3-E_4$.

\item $(1, \, 12, \, 13, \, 14)$: $H-E_1 - E_2 - E_3 -E_4$, \,$H-2E_1$,
  $2H-3E_1-2E_i-E_j-E_k \, (i,j,k=2,3,4)$, \,$3H-5E_1-2E_i-2E_j-E_k \,
  (i,j,k=2,3,4)$, \,$4H-6E_1-3E_2-3E_3-3E_4$, \,$5H-8E_1-4E_i-3E_j-3E_k \,
  (i,j,k=2,3,4)$, \,$6H-10E_1-4 E_i-4 E_j-3E_k \, (i,j,k=2,3,4)$,
  $7H-12E_1-4E_2 -4E_3- 4E_4$.
\end{itemize}
\begin{itemize}
\item Classes in $\Nef{X}$ are effective; those with representatives
  disjoint from flopping locus remain nef on the flops.
\item The class $H-2E_1$: again this is represented by any hyperplane
  section of $Z$ which contains $T_{p_1}$. Again the base locus is
  $T_{p_1} \cap Z$, which is the locus of lines through $Z$. The
  argument from the previous case goes through unchanged.
\item Classes $2H-3E_1-2E_i$: this decomposes into effective divisors
  as $(H-E_1-2E_2)+(H-2E_1)$ and $(H-2E_1-E_2)+(H-E_1-E_2)$. In both
  decompositions the second term is a movable class, and the fixed
  parts of the two decompositions intersect properly, so this class is
  movable. Moreover, one can read off the base locus from the
  decompositions and see that the class becomes nef on the appropriate
  flop. 
\item Classes $2H-3E_1-2E_i-E_j$: again we have two decompositions not
  sharing a divisor, namely $(H-2E_1-E_3)+(H-E_1-2E_2)$ and
  $(H-2E_1-E_2)+(H-E_1-E_2-E_3)$. The standard argument shows these
  classes become nef on the appropriate flop. 
\item Classes $2H-3E_1-2E_i-E_j-E_k$: there is a pseudo-automorphism
  taking this class to the class $H-E_1-E_i-E_j$, which is
  base-point-free, so we can argue as before to conclude that it is nef
  on the appropriate flop. 
\item Classes $3H-5E_1-2E_i-2E_j$, $3H-5E_1-2E_i-2E_j-E_k$: there are
  pseudo-automorphisms taking these classes to $H-2E_1$ and $H-E_1-E_2$,
  respectively. The latter two classes are base-point-free on $X$, so
  we argue as before. 
\item The class $4H-6E_1-3E_2-3E_3-3E_4$: there is a
  pseudo-automorphism taking this class to the base-point-free class
  $H-E_1-E_2-E_3$.
\item The class  $5H-8E_1-4E_i-3E_j-3E_k$: there is a
  pseudo-automorphism taking this to the base-point-free class
  $H-E_1-E_2$. 
\item The class $6H-10E_1-4 E_i-4 E_j-3E_k$: there is a
  pseudo-automorphism taking this to the class $2H-3E_1-2E_2$. We have
  already shown that the latter class is movable and identified its
  base locus, so we can use the same arguments as before to conclude
  that the original class becomes nef on the appropriate flop.
\item The class $7H-12E_1-4E_2 -4E_3- 4E_4$: first we apply a
  pseudo-automorphism to take this to the class $3H-4E_1-4E_2$. The
  latter class can be split up into effective classes in two ways, as
  $(2H-3E_1-2E_2)+(H-E_1-2E_2)$ and
  $(2H-2E_1-3E_2)+(H-2E_1-E_2)$. The first term in each is movable, as
  already shown, and the two fixed divisors intersect in codimension
  2. We can then argue as before to conclude that this class becomes
  nef on the appropriate flop.
\end{itemize}

\item {\bf $Z$ is a cubic hypersurface in $\PP^{n+1}$.} In this case
  the set $\{X_i\}$ consists of a single SQM, namely $X$ itself. The
  nef cone $\Nef{X}$ is described in Theorem \ref{nef}, and one reads
  off that the faces which intersect the interior of the movable cone
  are dual to fiber components.

\item {\bf $Z \arrow \PP^n$ is a double cover branched over a
    quartic.} Again in this case the set $\{X_i\}$ consists only of
  $X$ itself, and Theorem \ref{nef} tells us that the faces of the
  cone of curves which intersect the interior of the movable cone are
  dual to fiber components. 
\end{enumerate}

This completes the proof of Claim \ref{claim-to-prove}. As explained
above, this shows that for each of our examples $X$, there is a finite
collection $\{X_i\}$ of SQMs of $X$ such that each nef cone
$\Nef{X_i}^e$ is rational polyhedral and spanned by effective
divisors, and such that 
\begin{align*}
\PsAut^*(X/\PP^{n-1}) \cdot \left ( \bigcup_i \Nef{X_i} \right) =
\Mov{X}^e.
\end{align*}

Given this, it is then straightforward to produce a precise rational
polyhedral fundamental domain for the action of the larger group
$\PsAut^*(X,\Delta)$ on $\Mov{X}^e$, as required.
\end{proof}

\section{Appendix: Sextic hypersurface in weighted projective
  space} \label{appendix} In this appendix, we show that the
Morrison--Kawamata conjecture also holds for case (5) on the list of
Fano manifolds of index $n-1$ in Section
\ref{section-introduction}. The proof here is rather simpler than in
the other cases, since it turns out the the movable cone is rational
polyhedral. For facts about weighted projective spaces, see for
example \cite{Fletcher}.
\begin{proposition}
  Let $Z$ be a general hypersurface of degree 6 in the weighted
  projective space $\PP(3,2,1,\ldots,1)$ of dimension $n+1$. Then
  there is a line bundle $H$ on $Z$ such that $-K_Z=(n-1)H$ and
  $H^n=1$. Let $X$ be the blowup of $Z$ in the point $p$, defined as
  the base locus of $H$. Then Conjecture \ref{conj-coneconjecture}
  holds for $X$.
\end{proposition}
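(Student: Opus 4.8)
The plan is to reduce the entire statement to an elementary two-dimensional cone computation, by showing that $N^1(X)$ has rank two.

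First I would record the numerical data on the weighted projective space $\PP = \PP(3,2,1,\ldots,1)$, whose sum of weights is $n+5$, so $K_\PP = \OO_\PP(-(n+5))$. By adjunction a sextic $Z$ satisfies $K_Z = \OO_\PP(1-n)|_Z$, so setting $H := \OO_\PP(1)|_Z$ gives $-K_Z = (n-1)H$; since $\OO_\PP(1)^{n+1} = \tfrac{1}{6}$ and $[Z] = \OO_\PP(6)$, we get $H^n = 6\cdot\OO_\PP(1)^{n+1} = 1$. Thus $|H|$ has a single base point $p = Z \cap \PP(3,2)$, and I would check that for general $Z$ this point avoids the two singular points of $\PP$, that $Z$ is smooth there, and hence that $p$ is a smooth point and $X = \mathrm{Bl}_p Z$ is a smooth projective variety. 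Exactly as in Section \ref{section-grassmannian}, $-\tfrac{1}{n-1}K_X = H - E$ is base-point-free and defines an elliptic fibration $f : X \to \PP^{n-1}$ (whose fibers are the Weierstrass sextics $Z \cap \PP(3,2,1)$), with $E$ its zero section; in particular $-K_X$ is semiample, so the cone conjecture applies to $X$ as in the introduction.

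The crux is that $N^1(X)$ has rank two. For this I would invoke the Grothendieck--Lefschetz theorem for quasismooth hypersurfaces of dimension $\dim Z = n \geq 3$ in weighted projective space (see for instance \cite{Fletcher}), which gives $\Pic(Z) = \ZZ \cdot H$. Hence $N^1(X) = \ZZ H \oplus \ZZ E$, with intersection data $H \cdot e = 0$, $E \cdot e = -1$, $H \cdot F = E \cdot F = 1$, where $e$ is a line in $E \cong \PP^{n-1}$ and $F$ is the fiber class of $f$. The cone of curves is then $\Curv{X} = \langle e, F \rangle$, whose dual is $\langle H, H-E\rangle$. Since $H$ is the pullback of the ample class $\OO_\PP(1)$ under the contraction $g : X \to Z$ and $H - E$ is base-point-free, both extremal rays are semiample, and I conclude $\Nef{X}^e = \Nef{X} = \langle H, H - E\rangle$, a rational polyhedral cone.

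It then remains only to show $\Mov{X} = \Nef{X}$, after which both parts of Conjecture \ref{conj-coneconjecture} follow formally. Since $\Nef{X} \subseteq \Mov{X}$ always holds, I need only rule out movable classes beyond the two boundary rays. Beyond $H - E$ lie the classes $H - cE$ with $c > 1$; as $F$ is a moving curve and $(H - cE)\cdot F = 1 - c < 0$, the theorem of \cite{BDPP} shows these are not even pseudoeffective. Beyond $H$ lie classes $aH + bE$ with $b > 0$; these have negative degree on the covering family of lines $e \subset E$, so every effective representative contains $E$ in its base locus and the class fails to be movable. Hence $\Mov{X} = \Nef{X}$ is rational polyhedral and $\Mov{X}^e = \Mov{X}$. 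Finally, since $\Aut^*(X,\Delta)$ and $\PsAut^*(X,\Delta)$ act by integral linear maps preserving the rational polyhedral cones $\Nef{X}^e$ and $\Mov{X}^e$, each must permute the finitely many primitive extremal generators and so has finite image; a rational polyhedral fundamental domain for each action then exists, exactly as in the Corollary following Theorem \ref{nef}. The only genuine input beyond this formal rank-two argument is the identification of the weighted-projective geometry in the first paragraph together with the rank statement $\rho(Z) = 1$, and this is where I expect the main work to lie.
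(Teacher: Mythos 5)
Your proposal is correct and follows essentially the same route as the paper's appendix: reduce to $\rho(X)=2$, identify the two extremal contractions (the blow-down $g:X\to Z$ and the elliptic fibration $f:X\to\PP^{n-1}$) to get $\Nef{X}^e=\Nef{X}=\langle H, H-E\rangle$, and then show $\Mov{X}^e=\Nef{X}^e$ because any non-nef class is negative on $e$ or on $F$, whose deformations sweep out a locus of codimension at most one. Your extra details (Grothendieck--Lefschetz for $\rho(Z)=1$, the BDPP argument on the $H-E$ side) are just more explicit versions of steps the paper leaves implicit.
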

\begin{proof}
  The canonical class of the weighted projective space
  $\PP=\PP(a_0,\ldots,a_{n+1})$ (where
  $\operatorname{gcd}(a_0,\ldots,a_{n+1})=1$) is $K_\PP = \OO(-\sum
  a_i)$. So by adjunction a hypersurface $Z$ of degree $d$ in this
  space has canonical class $K_Z=\OO(d-\sum_{i=0}^{n+1}
  a_i)$. Applying this in our case where $n$ of the weights equal $1$,
  we get $-K_Z=\OO(-6+3+2+n)=\OO(n-1)$. So $H=\OO(1)_{|Z}$. By the
  weighted form of B\'ezout's Theorem, we get $H^n=\frac{1}{3 \cdot 2
    \cdot 1 \cdots 1} \cdot 6 \cdot 1^n = 1$, as claimed.

  Now let $X$ be the blowup of $Z$ in the base locus of $H$. Note that
  $\rho(X)=2$. As before we get an elliptic fibration $f:X \arrow
  \PP^{n-1}$. To prove the first statement of the cone conjecture for
  $X$, it suffices to exhibit two contractions of $X$ which contract
  different curves. To see that this is enough, note that two line
  bundles $L_1$, $L_2$ corresponding to the contractions must span the
  two edges of $\Nef{X}$; moreover $L_1$ and $L_2$ must be semiample,
  in particular effective, so $\Nef{X}^e=\Nef{X}$, a rational
  polyhedral cone. The contractions we need are the blow-down map
  $\pi: X \arrow Z$ and the elliptic fibration $f: X \arrow
  \PP^{n-1}$.

  The statement about the movable cone follows immediately: in fact
  $\Mov{X}^e=\Nef{X}^e$. To see this, observe that by the previous
  paragraph, the cone of curves $\Curv{X}$ is spanned by $C_1$, the
  class of a line in the exceptional divisor of $\pi$, and $C_2$, the
  class of a fiber of $f$. So a non-nef divisor on $X$ must have
  negative degree on either $C_1$ or $C_2$. Since curves in each of
  these classes fill up a locus of codimension $\leq 1$ in $X$, no
  such divisor can be movable.
\end{proof}

\end{document}